\theoremstyle{plain}
\newtheorem{thm}{Theorem}[section]
\newtheorem{lem}[thm]{Lemma}
\newtheorem{cor}[thm]{Corollary}
\theoremstyle{definition}
\newtheorem{defn}[thm]{Definition}
\newtheorem{exl}[thm]{Example}
\newtheorem*{exlctd}{Example \ref{picexl}, Continued}
\newcommand{\R}{\mathbb{R}}
\newcommand{\Z}{\mathbb{Z}}
\newcommand{\lift}{\widetilde}
\newcommand{\htp}{\simeq}
\newcommand{\MC}{\text{\textit{MC}}}
\newcommand{\ME}{\text{\textit{ME}}}
\newcommand{\Reid}{\mathcal{R}}
\DeclareMathOperator{\Coin}{Coin}
\DeclareMathOperator{\Eq}{Eq}
\DeclareMathOperator{\ind}{ind}
\DeclareMathOperator{\sign}{sign}
\DeclareMathOperator{\im}{im}
\DeclareMathOperator{\coker}{coker}
\title{Nielsen equalizer theory\thanks{MSC2000: 54H25, 55M20}}
\author{P. Christopher Staecker\thanks{
Address: Department of Mathematics and Computer Science, Fairfield
University, Fairfield CT, USA}
\thanks{Email: cstaecker@fairfield.edu}
\thanks{Web: {http://faculty.fairfield.edu/cstaecker}}
\thanks{Keywords: Nielsen theory, equalizer theory, positive
  codimension coincidence theory}
}
\begin{document}
\bibliographystyle{hplain}
\maketitle

\begin{abstract}
We extend Nielsen coincidence theory to equalizer sets,
the points where a given set of (more than 2) mappings agree. On
manifolds, this theory is interesting only for maps between spaces of
different dimension, and our results hold for sets of $k$ maps on
compact manifolds from dimension $(k-1)n$ to dimension $n$. We define
the Nielsen equalizer number, which is a lower bound for the minimal
number of equalizer points when the maps are changed by
homotopies, and is in fact equal to this minimal number when the
domain manifold is not a surface.

As an application we give some results in Nielsen coincidence theory
with positive codimension. This includes a complete computation of the
geometric Nielsen number for maps between tori.
\end{abstract}

\section{Introduction}
The goal of this paper is to generalize the basic definitions and results of Nielsen coincidence theory to a theory of equalizer sets for sets of (possibly more than two) mappings. For spaces $X$ and $Y$ and maps $f_1, \dots, f_k:X \to Y$, the equalizer set is defined as
\[ \Eq(f_1, \dots, f_k) = \{x \in X \mid f_1(x) = \dots = f_k(x)\}. \]
This generalizes the coincidence set $\Coin(f_1,f_2) = \{ x \in X \mid f_1(x) = f_2(x)\}$ for two mappings.

Nielsen coincidence theory, see \cite{gonc05}, estimates the number of coincidence points of a pair of maps in a homotopy invariant way. Most of the techniques are a generalization of ideas from fixed point theory, see \cite{jian83}. In coincidence theory, one defines the \emph{Nielsen number} $N(f_1,f_2)$ of a pair of maps, which is a lower bound for the \emph{minimal coincidence number} $\MC(f_1,f_2)$:
\[ N(f_1,f_2) \le \MC(f_1,f_2) = \min \{ \#\Coin(f_1',f_2') \mid f_i'\htp f_i \}. \]
The above quantities are in fact equal when $X$ and $Y$ are compact
$n$-manifolds of the same dimension $n\neq 2$.
In this paper we extend this theory to equalizer sets. 

The typical
setting for Nielsen coincidence theory is for maps $X \to Y$ of
compact manifolds of the same dimension. For maps $f_1,f_2:X \to Y$ in
this setting, transversality arguments show that we can change the
maps by homotopy so that $\Coin(f_1,f_2)$ is a set of finitely many
points. At each of these points we define a \emph{coincidence index} which is then used to define the Nielsen number. In the case of differentiable manifolds we can define the index in terms of the determinant of the derivative maps at each coincidence point (see \cite{stae07a} for this approach).

When the dimension of $X$ is greater than that of $Y$, the typical approach to the coincidence index breaks down. In this case the derivative maps cannot be linear isomorphisms, and so their determinants cannot be used. A modified approach based on determinants is given by Jezierski in \cite{jezi01} which applies for maps into tori, but this is a fairly restrictive setting.

In this paper we will show that the typical approach, expressible in terms of determinants, does indeed succeed in the positive codimension setting when we admit more mappings to our theory, i.e.\ when we move from coincidence theory to equalizer theory. In this sense equalizer theory would seem to be the most natural and straightforward Nielsen-type theory in positive codimensions. Compared to the various approaches to positive codimension Nielsen coincidence theory (many are surveyed in \cite{gonc05}), our equalizer theory is substantially simpler and much more closely resembles classical Nielsen fixed point and coincidence theory. 

Nielsen equalizer theory will require a specific codimensional setting. Attempting a homotopy-invariant study of equalizers in codimension zero immediately gives:
\begin{thm}
If $X$ and $Y$ are compact manifolds of the same dimension, and $f_1, \dots, f_k: X \to Y$ are maps with $k>2$, then these maps can be changed by homotopy so that the equalizer set is empty.
\end{thm}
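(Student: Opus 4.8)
The plan is to recognize the equalizer set as the preimage of a diagonal submanifold and then apply transversality. Let $n$ be the common dimension of $X$ and $Y$, form the product map $F = (f_1,\dots,f_k)\colon X \to Y^k$, and let $\Delta = \{(y,\dots,y) \mid y \in Y\} \subseteq Y^k$ be the diagonal. Straight from the definitions, $\Eq(f_1,\dots,f_k) = F^{-1}(\Delta)$. Now $\Delta$ is a submanifold of $Y^k$ diffeomorphic to $Y$, so it has dimension $n$ and hence codimension $(k-1)n$ in the $kn$-dimensional manifold $Y^k$.

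Assuming (as is customary) that the manifolds carry smooth structures — otherwise one argues with simplicial approximation and general position — the transversality theorem lets us homotope $F$ to a map $F'$ that is transverse to $\Delta$. The key, and essentially cost-free, observation is that a map $X \to Y^k$ is precisely a $k$-tuple of maps $X \to Y$, and a homotopy $X \times I \to Y^k$ is precisely a $k$-tuple of homotopies $X \times I \to Y$; thus $F' = (f_1',\dots,f_k')$ with each $f_i' \htp f_i$, and the deformation of $F$ is realized by independent homotopies of the original maps.

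Finally, transversality forces the preimage to be empty: if some $x$ lay in $F'^{-1}(\Delta)$, transversality at $x$ would demand $dF'_x(T_xX) + T_{F'(x)}\Delta = T_{F'(x)}Y^k$, which is impossible since $\dim X + \dim \Delta = 2n < kn = \dim Y^k$ once $k > 2$ (here one uses $n \ge 1$; the case $n = 0$ is degenerate). Hence $\Eq(f_1',\dots,f_k') = F'^{-1}(\Delta) = \emptyset$. The argument is short, so there is no real obstacle; the only points needing care are the invocation of transversality (whence the smoothness assumption) and the dimension bookkeeping, which also makes transparent why $k > 2$ is essential — for $k = 2$ the diagonal in $Y^2$ has codimension exactly $\dim X$, so transversality yields only a finite coincidence set rather than the empty set, recovering the classical picture.
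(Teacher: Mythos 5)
Your proof is correct, but it takes a different route from the paper's. You treat the problem globally: you assemble the single map $F=(f_1,\dots,f_k):X\to Y^k$, identify $\Eq(f_1,\dots,f_k)$ with $F^{-1}(\Delta)$ for the diagonal $\Delta\subset Y^k$, and invoke Thom transversality once; the inequality $\dim X+\dim\Delta=2n<kn$ then forces the transverse preimage to be empty. The paper instead reduces twice to the classical codimension-zero coincidence result: it perturbs $f_2$ and $f_3$ separately so that $\Coin(f_1,f_2')$ and $\Coin(f_1,f_3')$ are finite and disjoint, and observes that the equalizer set sits inside their (empty) intersection. Your argument buys a cleaner conceptual picture --- the dimension count makes transparent exactly why $k>2$ is the threshold, and it never needs the unjustified (though easy) claim that the two finite coincidence sets can be arranged to be disjoint --- at the cost of invoking the full transversality theorem for maps into $Y^k$ rather than citing the standard $k=2$ coincidence fact as a black box. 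Both arguments share the same caveats: a smooth (or otherwise general-position-amenable) structure is assumed, and the statement degenerates when $n=0$ (e.g.\ $Y$ a point), which you rightly flag and the paper leaves implicit. One stylistic remark: the paper's pairing $(F,G)$ with target $Y^{k-1}$, introduced later in equation \eqref{FGdef}, is the graph-style reformulation of your diagonal picture, so your setup anticipates the machinery the paper builds on anyway.
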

\begin{proof}
Well-known transversality arguments show that we can change $f_2$ by a homotopy to $f_2'$ so that $\Coin(f_1,f_2')$ is a finite set of points. Similarly we obtain $f_3' \htp f_3$ such that $\Coin(f_1,f_3')$ is a finite set of points. These homotopies can be arranged so that $\Coin(f_1,f_2')$ and $\Coin(f_1,f_3')$ are disjoint. Thus 
\[ \Eq(f_1, f'_2, f'_3, f_4, \dots, f_k) \subset \Coin(f_1,f_2') \cap \Coin(f_1,f_3') = \emptyset. \qedhere \]
\end{proof}

Thus there is no interesting theory for counting the minimal number of equalizer points between compact manifolds of the same dimension, since this number is always zero. In this sense, the equalizer equation $f_1(x) = \dots = f_k(x)$ is
``overdetermined'' when the dimensions of the domain and codomain are equal. In order to obtain an interesting theory we must increase the dimension of the domain space. In particular, for equalizers of $k$ maps, we will require $X$ and $Y$ to be of dimensions $(k-1)n$ and $n$, respectively, for any $n$. Consider the following example:
\begin{exl}\label{picexl}
We will examine the equalizer set of three maps $f,g,h:T^2 \to S^1$ from the 2-dimensional torus to the circle. Viewing the torus as the quotient of $\R^2$ by the integer lattice, and $S^1$ as the quotient of $\R$ by the integers, we will specify our maps by integer matrices of size $1 \times 2$. Let the maps be given by matrices:
\newcommand{\spa}{\quad}
\[ A_f = (3\spa 1),\quad A_g = (0 \spa 2),\quad A_h = (-1\spa -1). \]
Let $C_{fg} = \Coin(f,g)$, with $C_{fh}$ and $C_{gh}$ defined similarly, and we have \[ \Eq(f,g,h) = C_{fg} \cap C_{gh} \cap C_{fh}.\]
(Actually the equalizer set is the intersection of any two of these coincidence sets.) 

It is straightforward to compute these sets. For example, $C_{fg}$ is
the set of points $(x,y)$ with $3x+y = 2y \mod \Z^2$, which is to say $y=3x \mod
\Z^2$. Similarly computing the sets $C_{fh}$ and $C_{gh}$ produces the
picture in Figure \ref{exlfig}, where the torus is drawn as
$[0,1]\times [0,1]$ with opposite sides identified. We see in the
picture that $\Eq(f,g,h)$ consists of 10 points (the nine points where
the lines visibly intersect, plus the intersection at the identified
corners of the diagram). 

\begin{figure}
\[ 
\newcommand{\xwidth}{40}
\newcommand{\axline}{-}
\begin{xy}
(0,0)="ll";
(0,\xwidth) = "ul";
(\xwidth,0) = "lr";
(\xwidth,\xwidth) = "ur";
{\ar@{-} "ll";"ul"};
{\ar@{-} "ul";"ur"};
{\ar@{-} "ur";"lr"};
{\ar@{-} "lr";"ll"};
"ll"; "lr"; **\dir{} ?(.333) = "ltt"; ?(.666) = "lt";
"ul"; "ur"; **\dir{} ?(.333) = "utt"; ?(.666) = "ut";
{\ar@{-} "ll";"ut"};
{\ar@{-} "lt";"utt"};
{\ar@{-} "ltt";"ur"};
"ll"; "ul"; **\dir{} ?(.333)*{} = "ltt"; ?(.666)*{} = "lt";
"lr"; "ur"; **\dir{} ?(.333)*{} = "rtt"; ?(.666)*{} = "rt";
{\ar@{.} "ul";"rtt"};
{\ar@{.} "ltt";"rt"};
{\ar@{.} "lt";"lr"};
"ll";"ul"; **\dir{} ?(.5) = "lh";
"lr";"ur"; **\dir{} ?(.5) = "rh";
"ul";"ur"; **\dir{} ?(.25) = "ufff"; ?(.5)="uff"; ?(.75)="uf";
"ll";"lr"; **\dir{} ?(.25) = "lfff"; ?(.5)="lff"; ?(.75)="lf";
{\ar@{--} "lh";"lf"};
{\ar@{--} "ul";"lff"};
{\ar@{--} "uf";"lfff"};
{\ar@{--} "uff";"lr"};
{\ar@{--} "ufff";"rh"};
{\ar@{-} (50,30)*{C_{fg}:\quad}; (65,30)}
{\ar@{.} (50,20)*{C_{gh}:\quad}; (65,20)}
{\ar@{--} (50,10)*{C_{fh}:\quad}; (65,10)}
\end{xy}
\quad 
\]
\caption{Coincidence sets and equalizer points for Example \ref{picexl}. \label{exlfig}}
\end{figure}
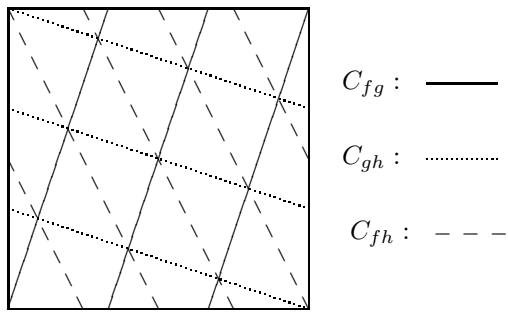

In this paper we will define the Nielsen number $N(f,g,h)$ which is a
lower bound for the minimum number of equalizer points when the maps
are changed by homotopy. In Theorem \ref{torusthm} we give a simple
formula for computing this quantity on tori, which in this example gives
\[ N(f,g,h) = 
\left| \begin{pmatrix} 0&2\\-1&-1 \end{pmatrix} - \begin{pmatrix} 3&1\\3&1\end{pmatrix}\right| = 10. \]
Thus these maps cannot be changed by homotopy to have fewer than 10 equalizer points.
\end{exl}

The construction of the theory is facilitated by a fundamental correspondence between $\Eq(f_1, \dots, f_k)$ and the coincidence set of a pair of related maps. 
Let $F,G:X \to Y^{k-1}$ be given by 
\begin{equation}\label{FGdef}
F(x) = (f_1(x), \dots, f_1(x)), \quad G(x) = (f_2(x), \dots, f_k(x)). \tag{$\star$}
\end{equation}
Since $X$ and $Y$ are compact with dimensions $(k-1)n$ and $n$ respectively, the
above $F$ and $G$ are maps between compact manifolds of the
same dimension, and $\Coin(F,G) = \Eq(f_1, \dots, f_k)$. This
correspondence is well-behaved under homotopy, since changing the maps
$f_i$ by homotopies corresponds in a natural way to a change of $F$
and $G$ by homotopies. As we shall see, the homotopy-invariant
behavior of $\Eq(f_1, \dots, f_k)$ is the same as that of
$\Coin(F,G)$, and we may define Nielsen-type invariants for the
equalizer set in terms of the same invariants from the coincidence
theory of $(F,G)$.  

In Section \ref{classessection} we define the Reidemeister and equalizer classes
which form the building blocks for our theory. In Section
\ref{nielsensection} we define the Nielsen number and in Section
\ref{compsection} we give some computational results for maps into
Jiang spaces and maps of tori. In Section \ref{positivecodimsection}
we give an application to Nielsen coincidence theory in positive
codimensions, giving a full computation of the ``geometric Nielsen
number'' on tori.

We would like to thank Robert F. Brown for helpful comments, and Philip Heath for bringing the reference \cite{jezi01} to our attention.

\section{Reidemeister and equalizer classes}\label{classessection}
Let $X$ and $Y$ be spaces with universal covering spaces (connected, locally path-connected, and semilocally simply connected), and let $\lift X$ and $\lift Y$ be the universal covering spaces with projection maps $p_X:\lift X \to X$ and $p_Y:\lift Y \to Y$. For maps $f_1, \dots, f_k: X \to Y$,
we wish to construct a Reidemeister-type theory for the equalizer points $\Eq(f_1, \dots, f_k)$, so that each point has an algebraic Reidemeister class, and two equalizer points can be combined by homotopy only when their classes are equal.

Our basic result is a generalization of a well-known result from coincidence
theory which is stated in part (without proof) as Lemma 2.3 of \cite{dj93}. For the
sake of completeness we give a full proof. The proof is similar to
that of Theorem 1.5 in \cite{jian83}, which is the corresponding statement in
fixed point theory. Throughout, elements of the fundamental group are
viewed as deck transformations on the universal covering space.

\begin{thm}\label{reidthm}
Let $f_1,\dots,f_k:X \to Y$ be maps with lifts $\lift f_i:\lift X \to \lift Y$ and induced homomorphisms $\phi_i:\pi_1(X) \to \pi_1(Y)$.
\begin{enumerate}
\item We have
\[ \Eq(f_1, \dots, f_k) = \bigcup_{\alpha_2, \dots, \alpha_k \in \pi_1(Y)} p_X \Eq(\lift f_1, \alpha_2\lift f_2, \dots, \alpha_k\lift f_k). \]
\item \label{disjointorequal}
For $\alpha_i, \beta_i \in \pi_1(X)$, the sets
\[ p_X \Eq(\lift f_1, \alpha_2\lift f_2, \dots, \alpha_k\lift f_k) \text{ and } p_X \Eq(\lift f_1, \beta_2\lift f_2, \dots, \beta_k\lift f_k) \]
are disjoint or equal. 
\item \label{coinreidclass} The above sets are equal if and only if there is some $z \in \pi_1(X)$ with 
\[ \beta_i = \phi_1(z) \alpha_i \phi_i(z)^{-1} \]
for all $i$.
\end{enumerate}
\end{thm}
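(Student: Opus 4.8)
The plan is to reduce the whole statement to a single structural fact about lifts, namely that a lift $\lift f_i$ of $f_i$ satisfies $\lift f_i \circ z = \phi_i(z)\circ \lift f_i$ for every deck transformation $z\in\pi_1(X)$ (this characterizes $\phi_i$ once $\lift f_i$ is fixed), together with the fact that the deck action of $\pi_1(Y)$ on $\lift Y$ is free and transitive on fibres; everything else is bookkeeping with the $k-1$ equations defining an equalizer. For part (1), the inclusion $\supseteq$ is immediate: if $\lift x\in\Eq(\lift f_1,\alpha_2\lift f_2,\dots,\alpha_k\lift f_k)$, then applying $p_Y$ and using $p_Y\circ\alpha_i=p_Y$ and $p_Y\circ\lift f_i=f_i\circ p_X$ shows $f_1(p_X\lift x)=\dots=f_k(p_X\lift x)$. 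For $\subseteq$, given $x$ with $f_1(x)=\dots=f_k(x)$, pick any $\lift x\in p_X^{-1}(x)$; then $\lift f_1(\lift x),\dots,\lift f_k(\lift x)$ all lie in one fibre of $p_Y$, so there are $\alpha_i\in\pi_1(Y)$ with $\alpha_i\lift f_i(\lift x)=\lift f_1(\lift x)$, which places $x$ in the corresponding term of the union.

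For parts (2) and (3) I would isolate one computation. Suppose a point $x$ lies in both $p_X\Eq(\lift f_1,\alpha_2\lift f_2,\dots,\alpha_k\lift f_k)$ and $p_X\Eq(\lift f_1,\beta_2\lift f_2,\dots,\beta_k\lift f_k)$. Choose a lift $\lift x$ of $x$ realizing the first membership, so $\lift f_1(\lift x)=\alpha_i\lift f_i(\lift x)$ for all $i$; any other lift is $z\lift x$ for a unique $z\in\pi_1(X)$, and the second membership is realized by one such $z\lift x$, so $\lift f_1(z\lift x)=\beta_i\lift f_i(z\lift x)$ for all $i$. Rewriting both sides via the commutation relation gives $\phi_1(z)\alpha_i\lift f_i(\lift x)=\beta_i\phi_i(z)\lift f_i(\lift x)$, and freeness of the deck action on $\lift Y$ forces $\phi_1(z)\alpha_i=\beta_i\phi_i(z)$, i.e.\ $\beta_i=\phi_1(z)\alpha_i\phi_i(z)^{-1}$, for every $i$. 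This is the ``only if'' direction of (3), and since any two of the projected sets that meet must then satisfy this relation, it reduces (2) to the ``if'' direction of (3). For that, suppose $\beta_i=\phi_1(z)\alpha_i\phi_i(z)^{-1}$ for all $i$ and some $z$; running the previous computation backwards shows that $\lift x\mapsto z\lift x$ carries $\Eq(\lift f_1,\alpha_2\lift f_2,\dots,\alpha_k\lift f_k)$ into $\Eq(\lift f_1,\beta_2\lift f_2,\dots,\beta_k\lift f_k)$, with inverse $\lift x\mapsto z^{-1}\lift x$ (using the same relation written with $z^{-1}$), so this is a bijection; since it commutes with $p_X$, the two projected sets are equal. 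Parts (2) and (3) follow.

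I do not expect a serious obstacle here — the argument is the equalizer analogue of Theorem~1.5 of \cite{jian83}. The point requiring care is the asymmetry built into the construction: $f_1$ is privileged, its lift is never altered by a deck transformation, and consequently the Reidemeister relation is not symmetric in the homomorphisms, carrying $\phi_1$ on the left and $\phi_i$ on the right; one must also keep all $k-1$ equations tied to a single $z$ rather than choosing a separate deck transformation for each index. The only genuine subtlety is the degenerate possibility in (3) that $\Eq(f_1,\dots,f_k)=\emptyset$, in which case every projected set is empty and hence vacuously equal while the algebraic relation may fail; as is standard in this subject this causes no trouble, since an actual equalizer point always determines a nonempty such set, and the bijection above still shows that whenever the relation holds the two sets are empty or nonempty together.
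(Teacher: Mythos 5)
Your proposal is correct and takes essentially the same route as the paper: the key computation comparing $\lift f_1(z\lift x)$ with $\beta_i\lift f_i(z\lift x)$ at a common point, the uniqueness of lifts (equivalently, freeness of the deck action) to extract $\beta_i=\phi_1(z)\alpha_i\phi_i(z)^{-1}$, and the map $\lift x\mapsto z\lift x$ to get the converse inclusion. Your closing remark about the degenerate case where both classes are empty is a genuine (if standard) subtlety that the paper's own proof passes over silently in exactly the same way, so it is a point in your favor rather than a discrepancy.
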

\begin{proof}
For the first statement, take some $x \in \Eq(f_1,\dots,f_k)$ and some $\lift x \in p_X^{-1}(x)$. We have $p_Y(\lift f_i(\lift x)) = f_i(x) = f_1(x)$ for all $i$, and thus the values $\lift f_i(\lift x)$ all differ by deck transformations. That is, there are $\alpha_i \in \pi_1(Y)$ with
\[ \lift f_1(\lift x) = \alpha_2 \lift f_2(\lift x) = \dots = \alpha_k \lift f_k(\lift x), \]
which is to say that $\lift x \in \Eq(\lift f_1, \alpha_2\lift f_2, \dots, \alpha_k \lift f_k)$, and so $x \in p_X \Eq(\lift f_1, \alpha_2\lift f_2, \dots, \alpha_k \lift f_k)$ as desired.

Now we prove statement \ref{coinreidclass}. First, let us assume that $p_X
\Eq(\lift f_1, \alpha_2\lift f_2, \dots, \alpha_k\lift f_k) = p_X
\Eq(\lift f_1, \beta_2\lift f_2, \dots, \beta_k\lift f_k)$. This means
that for any point $\lift
x \in \Eq(\lift  f_1, \alpha_2\lift f_2, \dots, \alpha_k\lift f_k)$,
there is some deck transformation $z\in \pi_1(X)$ with $z \lift x \in \Eq(\lift f_1,
\beta_2\lift f_2, \dots, \beta_k\lift f_k)$. Then we have
\[
\beta_i \lift f_i(z\lift x) = \lift f_1(z\lift x) = \phi_1(z) \lift
f_1(\lift x) = \phi_1(z) \alpha_i \lift f_i(\lift x) = \phi_1(z)
\alpha_i \phi_i(z)^{-1} \lift f_i(z\lift x)
\]
Since the two lifts $\beta_i \lift f_i$ and
$\phi_1(z)\alpha_i\phi_i(z)^{-1}\lift f_i$ agree at a point, they are
the same lift, and thus 
\[ \beta_i = \phi_1(z)\alpha_i\phi_i(z)^{-1} \]
as desired.

For the converse in statement \ref{coinreidclass}, assume that
$\beta_i = \phi_1(z)\alpha_i\phi_i(z)^{-1}$ for all $i$, and take $x
\in p_X \Eq(\lift f_1, \alpha_2\lift f_2, \dots, \alpha_k\lift
f_k)$. Then we have $\phi_1(z)\alpha_i=\beta_i\phi_i(z) $ for all
$i$, and so
\[ \lift f_1(z\lift x) = \phi_1(z)\lift f_1(\lift x) = \phi_1(z)
\alpha_i \lift f_i(\lift x) = \beta_i \phi_i(z) \lift f_i(\lift x) =
\beta_i \lift f_i(z\lift x). \]
Thus $z \lift x = \Eq(\lift f_1, \beta_2\lift f_2, \dots,
\beta_k\lift f_k)$, and so $x \in p_X \Eq(\lift f_1, \beta_2\lift f_2, \dots,
\beta_k\lift f_k)$, and we have shown
\[ p_X \Eq(\lift f_1, \alpha_2\lift f_2, \dots, \alpha_k\lift
f_k) \subset p_X \Eq(\lift f_1, \beta_2\lift f_2, \dots,
\beta_k\lift f_k). \]
A symmetric argument shows the converse inclusion, and so the above
sets are equal.

For statement \ref{disjointorequal}, it suffices to show that if
there is a point 
\[ x \in p_X\Eq(\lift f_1, \alpha_2\lift f_2, \dots, \alpha_k\lift
f_k) \cap p_X\Eq(\lift f_1, \beta_2\lift f_2, \dots, \beta_k\lift
f_k), \]
then the two sets of the above intersection are equal. For such a
point $x$, there are $\lift x_0,\lift x_1 \in p_X^{-1}(x)$ with 
\[ \lift x_0 \in \Eq(\lift f_1, \alpha_2\lift f_2, \dots, \alpha_k\lift
f_k), \quad \lift x_1 \in \Eq(\lift f_1, \beta_2\lift f_2, \dots, \beta_k\lift
f_k). \]
Let $z\in \pi_1(X)$ with $z\lift x_0 = \lift x_1$. Then we have 
\begin{align*}
\beta_i\lift f_i(z\lift x_0) &= \beta_i \lift f_i(\lift x_1) = \lift
f_1(\lift x_1) = \lift f_1(z\lift x_0) = \phi_1(z)\lift f_1(\lift x_0)
\\
&= \phi_1(z)\alpha_i\lift f_i(\lift x_0) = \phi_1(z)\alpha_i
\phi_i(z)^{-1} \lift f_i(z\lift x_0).
\end{align*}
The above equality shows two lifts of $f_i$ agreeing at the point
$z\lift x_0$, and
so we have $\beta_i = \phi_1(z)\alpha_i \phi_i(z)^{-1}$, which by
statement \ref{coinreidclass} implies that 
\[ p_X\Eq(\lift f_1, \alpha_2\lift f_2, \dots, \alpha_k\lift
f_k) = p_X\Eq(\lift f_1, \beta_2\lift f_2, \dots, \beta_k\lift
f_k)\]
as desired.
\end{proof}

Let $\Reid(\phi_1, \dots, \phi_k) = \pi_1(Y)^{k-1} / \sim$ be the quotient of $\pi_1(Y)^{k-1}$ by the following relation, inspired by statement \ref{coinreidclass} above:
\[ (\alpha_2, \dots, \alpha_k) \sim (\beta_2, \dots, \beta_k) \]
if and only if there is some $z\in \pi_1(X)$ with 
\[ \beta_i = \phi_1(z) \alpha_i \phi_i(z)^{-1} \]
for all $i\in \{2,\dots, k\}$. We call $\Reid(\phi_1,\dots,\phi_k)$ the set of \emph{Reidemeister classes} for $\phi_1,\dots,\phi_k$.

Then the theorem above gives the following disjoint union
\begin{equation*}\label{reiddj}
\Eq(f_1,\dots, f_k) = \bigsqcup_{(\alpha_i) \in \Reid(\phi_1, \dots, \phi_k)} p_X \Eq(\lift f_1, \alpha_2 \lift f_2, \dots, \alpha_k \lift f_k).
\end{equation*}

The above union partitions the equalizer set into \emph{Nielsen
  equalizer classes} (or simply \emph{equalizer classes}). That is, $C
\subset \Eq(f_1,\dots,f_k)$ is an equalizer class if and only if there
are $\alpha_i$ with $C = p_X \Eq(\lift f_1, \alpha_2\lift f_2, \dots,
\alpha_k \lift f_k)$. Note that an equalizer class can be empty. The
equalizer classes are related to the coincidence classes of the pair
$(F,G)$ from equation \eqref{FGdef} in the following way:
\begin{thm}\label{coineqclasses}
A subset $C \subset \Eq(f_1, \dots, f_k)$ is an equalizer class if and
only if $C$ is a coincidence class when regarded as a subset of
$\Coin(F,G)$. That is, $C$ is an equalizer class if and only if there
is a deck transformation $A \in \pi_1(Y^{k-1})$ with $C = p_X
\Coin(\lift F, A \lift G)$ for some lifts $\lift F$ and $\lift G$ of
$F$ and $G$. 
\end{thm}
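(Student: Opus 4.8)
The plan is to prove the equivalence by an explicit identification of subsets of $X$; no appeal to the manifold dimensions is needed, as this is a purely covering-space statement. First I would record the covering data for the product: since $Y$ admits a universal cover, so does $Y^{k-1}$, and the product map $p_Y \times \dots \times p_Y : (\lift Y)^{k-1} \to Y^{k-1}$ is simply connected over its base, hence equivalent to the universal covering $p_{Y^{k-1}}$, with deck group $\pi_1(Y^{k-1}) \cong \pi_1(Y)^{k-1}$ acting coordinatewise. Then I would fix lifts $\lift f_i : \lift X \to \lift Y$ of the maps $f_i$ and form the \emph{block-diagonal} lifts
\[ \lift F(\lift x) = (\lift f_1(\lift x), \dots, \lift f_1(\lift x)), \qquad \lift G(\lift x) = (\lift f_2(\lift x), \dots, \lift f_k(\lift x)), \]
which a coordinatewise check shows are genuine lifts of $F$ and $G$.

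The heart of the argument is the identity, for any deck transformation $A = (\alpha_2, \dots, \alpha_k) \in \pi_1(Y)^{k-1}$,
\[ \Coin(\lift F, A\lift G) = \{\lift x \in \lift X : \lift f_1(\lift x) = \alpha_i \lift f_i(\lift x) \text{ for } 2 \le i \le k\} = \Eq(\lift f_1, \alpha_2 \lift f_2, \dots, \alpha_k \lift f_k), \]
obtained by comparing coordinates in $(\lift Y)^{k-1}$; applying $p_X$ gives $p_X\Coin(\lift F, A\lift G) = p_X\Eq(\lift f_1, \alpha_2\lift f_2, \dots, \alpha_k\lift f_k)$. By the definition preceding the theorem, the sets $p_X\Eq(\lift f_1, \alpha_2\lift f_2, \dots, \alpha_k\lift f_k)$, as $(\alpha_2, \dots, \alpha_k)$ ranges over $\pi_1(Y)^{k-1}$, are exactly the equalizer classes of $f_1, \dots, f_k$. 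Hence every equalizer class has the form $p_X\Coin(\lift F, A\lift G)$ for the block-diagonal lifts above, and so is a coincidence class of $(F,G)$ in the sense of the statement. (That these sets are pairwise disjoint or equal, i.e.\ that they really are the coincidence classes, is just Theorem \ref{reidthm} applied to the two maps $F,G$, the case $k=2$.)

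For the converse I would argue that the block-diagonal lifts are no loss of generality: an arbitrary pair of lifts of $F$ and $G$ has the form $B\lift F$, $C\lift G$ for deck transformations $B,C \in \pi_1(Y^{k-1})$, and then $\Coin(B\lift F, A\,C\lift G) = \Coin(\lift F, B^{-1}AC\,\lift G)$, so every set $p_X\Coin(\lift F', A'\lift G')$ already occurs among the sets $p_X\Coin(\lift F, A\lift G)$. Combined with the displayed identity, any coincidence class of $(F,G)$ equals $p_X\Eq(\lift f_1, \alpha_2\lift f_2, \dots, \alpha_k\lift f_k)$ for suitable $\alpha_i$, hence is an equalizer class, which completes the equivalence. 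The whole proof is essentially bookkeeping; the only step that calls for a moment's care is this last reparametrization, where one must verify that the collection of coincidence classes of $(F,G)$ is independent of the chosen base lifts and so may be computed using the convenient block-diagonal lifts $\lift F = (\lift f_1, \dots, \lift f_1)$ and $\lift G = (\lift f_2, \dots, \lift f_k)$.
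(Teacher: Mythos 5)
Your proposal is correct and follows essentially the same route as the paper: the forward direction uses the identical block-diagonal lifts and the coordinatewise identity $\Coin(\lift F, A\lift G) = \Eq(\lift f_1, \alpha_2\lift f_2, \dots, \alpha_k\lift f_k)$, and your converse (writing arbitrary lifts as $B\lift F$, $C\lift G$ and absorbing $B$ into the deck transformation via $\Coin(B\lift F, AC\lift G) = \Coin(\lift F, B^{-1}AC\lift G)$) is exactly the paper's factorization $\lift f_1^i = \beta_i\lift f_1$ followed by moving the $\beta_i$ to the other side. No gaps.
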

\begin{proof}
First we assume that $C$ is an equalizer class, and so we have lifts $\lift f_i$ of $f_i$ and $\alpha_i \in \pi_1(Y)$ with $C = p_X \Eq(\lift f_1, \alpha_2\lift f_2, \dots, \alpha_k\lift f_k)$. 
Let $\lift F$ and $\lift G$ be given by
\[ \lift F(\lift x) = (\lift f_1(\lift x), \dots \lift f_1(\lift x)), \quad \lift G(\lift x) = (\lift f_2(\lift x), \dots, \lift f_k(\lift x)), \]
and let $A:\lift Y^{k-1} \to \lift Y^{k-1}$ be
\[ A(\lift y_2, \dots \lift y_k) = (\alpha_2\lift y_2, \dots, \alpha_k\lift y_k). \]
Then we have
\[ \Eq(\lift f_1, \alpha_1\lift f_2, \dots, \alpha_k\lift f_k) = \Coin(\lift F, A \lift G), \]
and so $C = p_X\Coin(\lift F,A\lift G)$ as desired.

Now for the converse we assume that $C$ is a coincidence class of $(F,G)$, which means there are lifts $\lift F$ and $\lift G$ of $F$ and $G$ with a deck transformation $A \in \pi_1(Y^{k-1})$ such that $C = p_X \Coin(\lift F, A\lift G)$. Since $\lift F$ and $\lift G$ are lifts of $F$ and $G$, we can write
\[ \lift F(\lift x)  = (\lift f_1^2(\lift x), \dots, \lift f_1^k(\lift x)), \quad \lift G(\lift x) = (\lift f_2(\lift x), \dots, \lift f_k(\lift x)) \]
where each $\lift f_1^i$ is a lift of $f_1$, and $\lift f_j$ is a lift of $f_j$ for $j\ge 2$. Similarly we may factor $A$ as $A = \alpha_1 \times \dots \times \alpha_k$ for $\alpha_i \in \pi_1(Y)$. 

Each of the $\lift f_1^i$ may be different, but there is a single lift $\lift f_1$ of $f_1$ with deck transformations $\beta_i$ such that $\beta_i\lift f_1 = \lift f_1^i$. Then we have
\begin{align*}
\Coin(\lift F,A\lift G) &= \Coin((\beta_2 \lift f_1, \dots, \beta_k\lift f_1), (\alpha_2\lift f_2, \dots, \alpha_k\lift f_k)) \\
&= \Coin((\lift f_1, \dots, \lift f_1), (\beta_2^{-1}\alpha_2\lift f_2, \dots, \beta_k^{-1}\alpha_k\lift f_k)) \\
&= \Eq(\lift f_1, \beta_2^{-1}\alpha_2 \lift f_2, \dots, \beta_k^{-1}\alpha_k \lift f_k)
\end{align*}
and so $C = p_X \Coin(\lift F, A \lift G)$ is an equalizer class.
\end{proof}

The equalizer classes can be described nicely in terms of paths in $X$ and their images under the $f_i$:
\begin{thm}
Two points $x, x' \in \Eq(f_1,\dots,f_k)$ are in the same equalizer class if and only if there is some path $\gamma:[0,1] \to X$ from $x$ to $x'$ such that $f_1(\gamma)$ and $f_i(\gamma)$ are homotopic as paths with fixed endpoints for all $i$.
\end{thm}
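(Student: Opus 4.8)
The plan is to translate the path-lifting criterion into the covering-space language of Theorem \ref{reidthm}, where "same equalizer class" has already been characterized algebraically. The key bookkeeping device is: a path $\gamma$ in $X$ from $x$ to $x'$, together with a choice of lift $\lift x \in p_X^{-1}(x)$, determines a unique lift $\lift\gamma$ starting at $\lift x$ and hence a specific lift $\lift x' = \lift\gamma(1)$ of $x'$; moreover each $f_i \circ \gamma$ lifts (starting at $\lift f_i(\lift x)$) to $\lift f_i \circ \lift\gamma$, and the condition that $f_1(\gamma) \simeq f_i(\gamma)$ rel endpoints is exactly the condition that $\lift f_1 \circ \lift \gamma$ and $\lift f_i \circ \lift\gamma$ have the same endpoint in $\lift Y$ (once their initial points are identified appropriately).

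First I would prove the forward direction. Suppose $x, x'$ lie in the same equalizer class, so by the disjoint-union decomposition there are lifts $\lift f_i$ and deck transformations $\alpha_i \in \pi_1(Y)$ with both $x, x' \in p_X\Eq(\lift f_1, \alpha_2\lift f_2, \dots, \alpha_k\lift f_k)$. Pick $\lift x, \lift x'$ in that equalizer set lying over $x, x'$ respectively; since $\lift X$ is path-connected, choose a path $\lift\gamma$ from $\lift x$ to $\lift x'$ and set $\gamma = p_X \circ \lift\gamma$. For each $i$, the paths $\lift f_1 \circ \lift\gamma$ and $\alpha_i \lift f_i \circ \lift\gamma$ are paths in $\lift Y$ with the same initial point (namely $\lift f_1(\lift x) = \alpha_i \lift f_i(\lift x)$) and the same terminal point (namely $\lift f_1(\lift x') = \alpha_i\lift f_i(\lift x')$); since $\lift Y$ is simply connected they are homotopic rel endpoints, and projecting down by $p_Y$ shows $f_1(\gamma) \simeq f_i(\gamma)$ rel endpoints.

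For the converse, suppose $\gamma$ is a path from $x$ to $x'$ with $f_1(\gamma) \simeq f_i(\gamma)$ rel endpoints for all $i$. Fix any lift $\lift f_1$ of $f_1$ and a point $\lift x \in p_X^{-1}(x)$; let $\lift\gamma$ be the lift of $\gamma$ starting at $\lift x$, with $\lift x' := \lift\gamma(1)$. Because $f_i(x) = f_1(x)$, the values $\lift f_i(\lift x)$ and $\lift f_1(\lift x)$ differ by a deck transformation $\alpha_i \in \pi_1(Y)$, i.e. $\lift f_1(\lift x) = \alpha_i\lift f_i(\lift x)$; after replacing $\lift f_i$ by $\alpha_i\lift f_i$ we may assume $\lift f_1$ and $\lift f_i$ agree at $\lift x$, so $\lift x \in \Eq(\lift f_1, \lift f_2, \dots, \lift f_k)$ and hence $x \in p_X\Eq(\lift f_1, \lift f_2, \dots, \lift f_k)$. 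Now $\lift f_1 \circ \lift\gamma$ and $\lift f_i \circ \lift\gamma$ are lifts of the rel-endpoint-homotopic paths $f_1(\gamma)$ and $f_i(\gamma)$ with the same starting point $\lift f_1(\lift x)$; by the homotopy lifting property (uniqueness of path lifts up to endpoint-preserving homotopy) they have the same endpoint, i.e. $\lift f_1(\lift x') = \lift f_i(\lift x')$. This holds for every $i$, so $\lift x' \in \Eq(\lift f_1, \lift f_2, \dots, \lift f_k)$ and therefore $x' \in p_X\Eq(\lift f_1, \lift f_2, \dots, \lift f_k)$. Thus $x$ and $x'$ lie in the same equalizer class.

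The only mildly delicate point — hardly an obstacle — is the careful matching of basepoints and the replacement of $\lift f_i$ by a deck-transformed lift so that the relevant initial points coincide; once that bookkeeping is set up, both implications reduce to the standard fact that a homotopy of paths rel endpoints in the base lifts to a homotopy rel endpoints upstairs, so lifts of rel-endpoint-homotopic paths sharing an initial point share a terminal point.
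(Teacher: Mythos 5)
Your proof is correct, but it takes a genuinely different route from the paper. The paper's proof is a three-line reduction: it invokes Theorem \ref{coineqclasses} to identify equalizer classes with coincidence classes of the pair $(F,G)$ from \eqref{FGdef}, cites the standard coincidence-theoretic fact that two coincidence points lie in the same class iff they are joined by a path $\gamma$ with $F(\gamma) \htp G(\gamma)$, and then unpacks that condition coordinatewise into $f_1(\gamma) \htp f_i(\gamma)$ for each $i$. You instead work directly with the covering-space description of equalizer classes from Theorem \ref{reidthm} and essentially reprove, in the equalizer setting, the standard result the paper cites: the forward direction uses simple connectivity of $\lift Y$ to produce the rel-endpoint homotopies, and the converse uses uniqueness of endpoints for lifts of rel-endpoint-homotopic paths. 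Both implications are sound, and the basepoint bookkeeping (replacing $\lift f_i$ by $\alpha_i \lift f_i$ so all lifts agree at $\lift x$) is handled correctly. What your approach buys is self-containedness --- no appeal to an external ``standard result'' --- at the cost of length; what the paper's approach buys is consistency with its organizing principle of funneling every equalizer statement through the coincidence theory of $(F,G)$, which keeps the proof to a few lines.
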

\begin{proof}
Our points $x,x'$ are in the same equalizer class if and only if they are in the same coincidence class of the pair $(F,G)$. A standard result in coincidence theory shows that this is equivalent to the existence of a path $\gamma$ in $X$ from $x$ to $x'$ with $F(\gamma) \htp G(\gamma)$. This is equivalent to 
\[ (f_1,\dots,f_1)(\gamma) \htp (f_2,\dots,f_k)(\gamma), \]
which is equivalent to $f_1(\gamma) \htp f_i(\gamma)$ for each $i$.
\end{proof}

\section{The equalizer index and the Nielsen number}\label{nielsensection}
Let $\Eq(f_1, \dots, f_k, U) = \Eq(f_1, \dots, f_k) \cap U$, and let $\Coin(f,g,U) = \Coin(f,g) \cap U$. 

Our index for equalizer sets will be defined in terms of the coincidence index $i$. We first review some properties of the coincidence index. Let $f,g:M \to N$ be maps between compact orientable manifolds of the same dimension.
The coincidence index $i(f,g,U)$ is an integer valued function defined for open sets $U$ with $\Coin(f,g,U)$ compact. It satisfies the following properties:

\begin{itemize}
\item \emph{Homotopy:} Let $f'\htp f$ and $g' \htp g$, by homotopies $F_t$ and $G_t$, such that the set 
\[ \{(x,t) \mid x \in \Coin(F_t,G_t,U) \} \subset M \times [0,1] \]
is compact (such a pair of homotopies is called \emph{admissible}). Then $i(f,g,U) = i(f',g',U)$.
\item \emph{Additivity:} If $U_1 \cap U_2 = \emptyset$ and $\Coin(f,g,U) \subset U_1 \cup U_2$, then 
\[ i(f,g,U) = i(f,g,U_1) + i(f,g,U_2). \]
\item \emph{Solution:} If $i(f,g,U) \neq 0$, then $\Coin(f,g,U)$ is not empty.
\end{itemize}

We wish to define a similar index in the equalizer setting. Let $X$ and $Y$ be compact orientable manifolds of dimensions $(k-1)n$ and $n$, respectively, with maps $f_1, \dots, f_k:X \to Y$. 
We call $(f_1, \dots, f_k, U)$ \emph{admissible} when $\Eq(f_1, \dots, f_k, U)$ is compact.

Let $F,G:X \to Y^{k-1}$ be the maps as in \eqref{FGdef}. These are 
maps between compact orientable manifolds of the same dimension. When
$(f_1,\dots,f_k,U)$ is admissible, then $\Coin(F,G,U) = \Eq(f_1,
\dots, f_k,U)$ is compact, and thus the coincidence index $i(F,G,U)$
is defined. We define the equalizer index $\ind(f_1,\dots, f_k,U)$ to
be $i(F,G,U)$. 

This equalizer index satisfies the appropriate homotopy, additivity, and solution properties. If $(f_1,\dots,f_k,U)$ and $(f'_1,\dots,f'_k,U)$ are admissable and $f_i \htp f'_i$ with homotopy $H^i$, we say that $(H^i)$ is an \emph{admissible homotopy} of $(f_1, \dots, f_k, U)$ to $(f_1',\dots,f_k', U)$ when the set 
\[ \{ (x,t) \mid x \in \Eq(H^1_t, \dots, H^k_t, U) \} \subset X \times I  \]
is compact.

\begin{thm}\label{indthm}
Let $f_1,\dots,f_k:X \to Y$ be maps of compact orientable manifolds of dimensions $(k-1)n$ and $n$ respectively, and let $U\subset X$ be open with $(f_1,\dots,f_k,U)$ admissable.

Then the equalizer index $\ind(f_1,\dots,f_k,U)$ is defined and satisfies the following properties:
\begin{itemize} 
\item \emph{Homotopy:} If $(f_1, \dots, k_k,U)$ is admissibly homotopic to $(f'_1,\dots,f'_k,U)$, then $\ind(f_1,\dots,f_k,U) = \ind(f'_1,\dots,f'_k,U)$.
\item \emph{Additivity:} If $U_1 \cap U_2 = \emptyset$ and $\Eq(f_1,\dots,f_k,U) \subset U_1 \cup U_2$, then 
\[ \ind(f_1,\dots,f_k,U) = \ind(f_1,\dots,f_k,U_1) + \ind(f_1,\dots,f_k,U_2). \]
\item \emph{Solution:} If $\ind(f_1,\dots,f_k,U) \neq 0$, then $\Eq(f_1,\dots,f_k,U)$ is not empty.
\end{itemize}
\end{thm}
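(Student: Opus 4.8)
The plan is to deduce each of the three properties directly from the corresponding property of the coincidence index $i$ applied to the pair $(F,G)$ of \eqref{FGdef}, using the defining equation $\ind(f_1,\dots,f_k,U) = i(F,G,U)$ together with the identity $\Coin(F,G,U) = \Eq(f_1,\dots,f_k,U)$ noted above. First I would observe that the index is well defined: since $X$ has dimension $(k-1)n$ and $Y^{k-1}$ has dimension $(k-1)n$, and a finite product of compact orientable manifolds is again compact and orientable, the coincidence index $i(F,G,U)$ is defined whenever $\Coin(F,G,U)$ is compact, which by the identity above holds precisely when $(f_1,\dots,f_k,U)$ is admissible.

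For \emph{additivity}, assuming $U_1 \cap U_2 = \emptyset$ and $\Eq(f_1,\dots,f_k,U) \subset U_1 \cup U_2$, the identity $\Coin(F,G,U) = \Eq(f_1,\dots,f_k,U)$ gives $\Coin(F,G,U) \subset U_1 \cup U_2$, so the additivity property of $i$ yields $i(F,G,U) = i(F,G,U_1) + i(F,G,U_2)$; rewriting each term through the definition of $\ind$ gives the claim. For \emph{solution}, if $\ind(f_1,\dots,f_k,U) = i(F,G,U) \neq 0$, then the solution property of $i$ gives $\Coin(F,G,U) \neq \emptyset$, hence $\Eq(f_1,\dots,f_k,U) \neq \emptyset$.

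For \emph{homotopy}, given an admissible homotopy $(H^i)$ from $(f_1,\dots,f_k,U)$ to $(f'_1,\dots,f'_k,U)$, I would define homotopies $F_t, G_t : X \to Y^{k-1}$ by $F_t(x) = (H^1_t(x),\dots,H^1_t(x))$ and $G_t(x) = (H^2_t(x),\dots,H^k_t(x))$, so that $F_0 = F$, $F_1 = F'$, $G_0 = G$, $G_1 = G'$. For each $t$ one has $\Coin(F_t,G_t,U) = \Eq(H^1_t,\dots,H^k_t,U)$, so the set $\{(x,t) \mid x \in \Coin(F_t,G_t,U)\}$ equals $\{(x,t) \mid x \in \Eq(H^1_t,\dots,H^k_t,U)\}$, which is compact by admissibility of $(H^i)$. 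Thus $(F_t,G_t)$ is an admissible pair of homotopies, and the homotopy property of the coincidence index gives $i(F,G,U) = i(F',G',U)$, that is, $\ind(f_1,\dots,f_k,U) = \ind(f'_1,\dots,f'_k,U)$.

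The verifications are essentially bookkeeping once the correspondence $\Coin(F,G,\cdot) = \Eq(f_1,\dots,f_k,\cdot)$ and its naturality under homotopies are in hand; the only point needing a moment's care is matching the two admissibility conditions in the homotopy statement, together with checking that $Y^{k-1}$ inherits compactness and orientability so that the coincidence index is available at all. I expect no genuine obstacle here.
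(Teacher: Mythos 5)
Your proposal is correct and follows exactly the paper's approach: the published proof simply notes that all three properties follow from the corresponding properties of the coincidence index of the pair $(F,G)$ from \eqref{FGdef}, via the identity $\Coin(F,G,U) = \Eq(f_1,\dots,f_k,U)$ and the definition $\ind(f_1,\dots,f_k,U) = i(F,G,U)$. Your extra care in matching the two admissibility conditions in the homotopy property is exactly the one point the paper leaves to the reader.
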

\begin{proof}
The proofs of these properties all follow from the same properties of
the coincidence index of the pair $F,G$ as in \eqref{FGdef}.
\end{proof}

For an equalizer class $C$, we define the \emph{index} of $C$, written
$\ind(f_1,\dots,f_k,C)$, as $\ind(F,G,U)$, where $U$ is an open set
with $\Coin(F,G,U) = C$ (such an open set will always exist because
coincidence classes are closed and $X$ is compact).

At this point we take a slight diversion to give a note on the computation of the index of differentiable maps in terms of their derivatives.
When $X$ and $Y$ are differentiable manifolds and each of $f_i$ is differentiable, the maps $F$ and $G$ will also be differentiable, and the derivative maps $DF_x, DG_x: \R^{(k-1)n} \to \R^{(k-1)n}$ are defined at each point $x \in X$. 

Let $x \in \Eq(f_1,\dots,f_k)$ be an equalizer point. We say that $x$ is \emph{nondegenerate} when $DG_x - DF_x$ is nonsingular. In this case there is a neighborhood $U$ around $x$ containing no other coincidence points of $F$ and $G$, and thus no other equalizer points, and the index can be computed by the well-known formula from coincidence theory:
\[ \ind(f_1,\dots,f_k,U) = i(F,G, U) = \sign \det(DG_x - DF_x). \]
The definitions of $F$ and $G$ give the following formula in terms of the $f_i$.

\begin{thm}
Let $f_1,\dots,f_k:X \to Y$ be maps of compact orientable manifolds of dimensions $(k-1)n$ and $n$ respectively, and let $x \in \Eq(f_1,\dots,f_k)$ be nondegenerate. 

Then there is a neighborhood $U$ of $x$ with $\Eq(f_1,\dots,f_k,U) = \{x\}$ such that
\[ \ind(f_1,\dots,f_k,U) = \sign \det\begin{pmatrix} df_{2} - df_{1} \\ \vdots \\ df_{k} - df_{1} \end{pmatrix} \]
where all derivatives are taken at the point $x$ (each row in the above is an $n \times (k-1)n$ block matrix, so that the whole matrix has size $(k-1)n\times (k-1)n$).
\end{thm}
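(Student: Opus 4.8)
The plan is to reduce the statement to the derivative formula for the coincidence index that was recalled immediately before the statement, namely $\ind(f_1,\dots,f_k,U) = i(F,G,U) = \sign\det(DG_x - DF_x)$ when $DG_x - DF_x$ is nonsingular, and then merely to identify the matrix $DG_x - DF_x$ in suitable coordinates. So there are really only two things to do: compute the derivatives of $F$ and $G$ at $x$ as block matrices, and check that the resulting nonsingularity condition matches the hypothesis that $x$ is nondegenerate.

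First I would fix coordinate charts on $X$ near $x$ and on $Y$ near the common value $y = f_1(x) = \dots = f_k(x)$, and equip $Y^{k-1}$ with the corresponding product chart (and product orientation) near $(y,\dots,y)$. With respect to these choices each $df_i$ at $x$ is a genuine $n \times (k-1)n$ matrix, representing the linear map $T_xX \to T_yY$, and the derivatives $DF_x, DG_x \colon \R^{(k-1)n}\to\R^{(k-1)n}$ are defined since the $f_i$, and hence $F$ and $G$, are differentiable. Because $F=(f_1,\dots,f_1)$ has all components equal to $f_1$, for $v\in T_xX$ we get $DF_x(v) = (df_1(v),\dots,df_1(v))$, so in block form $DF_x$ is the matrix whose $k-1$ row blocks are each $df_1$; similarly, since the $(i-1)$st component of $G$ is $f_i$, the map $DG_x$ has row blocks $df_2,\dots,df_k$. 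Subtracting row block by row block gives
\[ DG_x - DF_x = \begin{pmatrix} df_2 - df_1 \\ \vdots \\ df_k - df_1 \end{pmatrix}, \]
a $(k-1)n \times (k-1)n$ matrix. Thus the nondegeneracy hypothesis — that $DG_x - DF_x$ is nonsingular — is exactly the statement that this stacked matrix is nonsingular; as recalled above, this produces a neighborhood $U$ of $x$ with $\Coin(F,G,U) = \Eq(f_1,\dots,f_k,U) = \{x\}$, and substituting the displayed identity into $\ind(f_1,\dots,f_k,U) = \sign\det(DG_x - DF_x)$ gives the claimed formula.

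The computation is routine block-matrix bookkeeping; the only point that deserves a word of care is the handling of orientations, since the coincidence index — and therefore the sign of the determinant appearing in the formula — depends on chosen orientations of the domain and codomain. The resolution is simply to declare that $Y^{k-1}$ carries the product orientation induced by a fixed orientation of $Y$, so that the coordinate descriptions of $DF_x$ and $DG_x$ above are the ones compatible with the index convention underlying $i(F,G,U) = \sign\det(DG_x - DF_x)$; with those conventions fixed, no sign ambiguity remains and the proof is complete.
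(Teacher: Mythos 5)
Your proposal is correct and follows exactly the route the paper intends: the paper gives no separate proof, stating only that ``the definitions of $F$ and $G$ give the following formula,'' which is precisely your block-matrix computation of $DG_x - DF_x$ combined with the coincidence-index formula recalled just before the theorem. Your extra remark on fixing the product orientation of $Y^{k-1}$ is a reasonable touch of care but does not change the argument.
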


Now we discuss the index theory for the nonorientable case. For the
coincidence theory of maps $f,g:M \to N$ of compact (perhaps
nonorientable) manifolds of the same dimension, an integer-valued coincidence index
cannot in general be defined. There is a related \emph{semi-index}
(see \cite{dj93}) which plays a similar role.

The semi-index, which we denote $|i|$, is defined not for arbitrary open sets, but only for coincidence classes, and satisfies properties similar to those of the coincidence index. Let $C \subset \Coin(f,g)$ be a coincidence class with $C= p\Coin(\lift f, \alpha \lift g)$. Then if $f\htp f'$ and $g \htp g'$, these homotopies will lift, producing maps $\lift f'\htp \lift f$ and $\lift g'\htp \lift g$ which are lifts of $f'$ and $g'$ respectively. Thus $D=p\Coin(\lift f',\alpha \lift g')$ is a coincidence class of $(f',g')$, and we say that $D$ is ``related to $C$'' with respect to the pair of homotopies.

If $f,g:M\to N$ are maps of compact manifolds of the same dimension and $C$ is a (possibly empty) coincidence class, then $|i|(f,g,C)$ is defined and satisfies:
\begin{itemize}
\item \emph{Homotopy:} If $f' \htp f$ and $g'\htp g$, and $D$ is the coincidence class of $(f',g')$ which is related to $C$ with respect to these homotopies, then $|i|(f,g,C) = |i|(f',g',D)$.
\item \emph{Solution:} If $|i|(f,g,C) \neq 0$, then $C$ is not empty.
\item \emph{Naturality:} If $M$ and $N$ are orientable, then $|i|(f,g,C) = |i(f,g,C)|$, the absolute value of the usual coincidence index.
\end{itemize}

In the setting of equalizer theory for maps $f_1,\dots,f_k:X \to Y$ of compact (possibly nonorientable) manifolds with an equalizer class $C$, we define the \emph{equalizer semi-index} as in the orientable case: let $(F,G)$ be as in \eqref{FGdef}, and we define $|\ind|(f_1,\dots,f_k,C) = |i|(F,G,C)$. Given homotopies $f'_i\htp f_i$, the ``relation'' between equalizer classes of $(f_1,\dots,f_k)$ and $(f'_1,\dots,f'_k)$ is defined exactly as in coincidence theory.

The following has routine proofs similar to those for Theorem \ref{indthm}.
\begin{thm}\label{semiind}
Let $f_1,\dots,f_k:X \to Y$ be maps of compact (possibly nonorientable) manifolds of dimensions $(k-1)n$ and $n$ respectively, and let $C \subset \Eq(f_1,\dots,f_k,U)$ be an equalizer class.

Then the equalizer semi-index $|\ind|(f_1,\dots,f_k,C)$ is defined and satisfies the following properties:
\begin{itemize} 
\item \emph{Homotopy:} If $f_i$ is homotopic to $f'_i$ for each $i$ and $D$ is the equalizer class of $(f'_1, \dots, f'_k)$ which is related to $C$, then \[ |\ind|(f_1,\dots,f_k,C) = |\ind|(f'_1,\dots,f'_k,D). \]
\item \emph{Solution:} If $|\ind|(f_1,\dots,f_k,C) \neq 0$, then $C$ is not empty.
\item \emph{Naturality:} If $X$ and $Y$ are orientable, then 
\[ |\ind|(f_1,\dots,f_k,C) = |\ind(f_1,\dots,f_k,C)|, \]
the absolute value of the usual equalizer index.
\end{itemize}
\end{thm}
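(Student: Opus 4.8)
The plan is to deduce each of the three properties from the corresponding property of the coincidence semi-index $|i|$ applied to the pair $(F,G)$ of \eqref{FGdef}, mirroring the way Theorem \ref{indthm} was reduced to the coincidence index. Recall that $|\ind|(f_1,\dots,f_k,C)$ is defined to be $|i|(F,G,C)$, with $C$ regarded as a coincidence class of $(F,G)$ by Theorem \ref{coineqclasses}. The first observation is that this is always legitimate: since $X$ is compact, $\Coin(F,G) = \Eq(f_1,\dots,f_k)$ is closed in $X$ and hence compact, so $|i|(F,G,C)$ is defined for every coincidence class $C$, and therefore $|\ind|(f_1,\dots,f_k,C)$ is defined for every equalizer class $C$.

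Next I would dispose of the solution and naturality properties, which are immediate. For solution: if $|\ind|(f_1,\dots,f_k,C) = |i|(F,G,C)$ is nonzero, the solution property of $|i|$ forces $C$ to be nonempty as a coincidence class of $(F,G)$, hence nonempty as an equalizer class. For naturality: if $X$ and $Y$ are orientable then $Y^{k-1}$ is orientable, so $F$ and $G$ are maps of compact orientable manifolds of the same dimension; the naturality property of $|i|$ gives $|i|(F,G,C) = |i(F,G,C)|$, and since $\ind(f_1,\dots,f_k,C)$ is by definition $i(F,G,C)$ we get $|\ind|(f_1,\dots,f_k,C) = |\ind(f_1,\dots,f_k,C)|$.

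For the homotopy property I would start from homotopies $H^i$ from $f_i$ to $f'_i$ and assemble them into homotopies from $F$ to $F'$ and from $G$ to $G'$, where $F',G'$ are the maps built from the $f'_i$ as in \eqref{FGdef}. The homotopy property of the coincidence semi-index then gives $|i|(F,G,C) = |i|(F',G',D')$, where $D'$ is the coincidence class of $(F',G')$ related to $C$ under the assembled homotopies. The one nontrivial point --- and the step I expect to be the main obstacle --- is checking that $D'$, as a subset of $X$, coincides with the equalizer class $D$ of $(f'_1,\dots,f'_k)$ related to $C$ in the sense defined just before the theorem. I would handle this with the bookkeeping already implicit in the proof of Theorem \ref{coineqclasses}: write $C = p_X\Eq(\lift f_1,\alpha_2\lift f_2,\dots,\alpha_k\lift f_k) = p_X\Coin(\lift F, A\lift G)$ with $A = \alpha_2\times\cdots\times\alpha_k$ and $\lift F,\lift G$ assembled from the $\lift f_i$; lift each $H^i$ starting at $\lift f_i$ to obtain $\lift f'_i$; and observe that the assembly of the $\lift f'_i$ is exactly the pair of lifts of $F',G'$ produced by lifting the assembled homotopies starting at $\lift F,\lift G$. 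Then $D = p_X\Eq(\lift f'_1,\alpha_2\lift f'_2,\dots,\alpha_k\lift f'_k) = p_X\Coin(\lift F', A\lift G') = D'$, and the homotopy property follows.

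Apart from this compatibility check, the argument is a direct transcription of the properties of $|i|$, so I do not anticipate any real difficulty; the content is entirely in knowing (from Theorem \ref{coineqclasses}) that equalizer classes are precisely the coincidence classes of $(F,G)$ and that this identification respects the lifting of homotopies.
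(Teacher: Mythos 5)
Your proposal is correct and follows essentially the same route as the paper, which simply notes that the properties are ``routine proofs similar to those for Theorem \ref{indthm},'' i.e.\ direct reductions to the corresponding properties of the coincidence semi-index of the pair $(F,G)$ from \eqref{FGdef}. Your explicit verification that the related coincidence class $D'$ of $(F',G')$ agrees with the related equalizer class $D$ under the lifting bookkeeping is exactly the detail the paper leaves implicit, and it is handled correctly.
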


An equalizer class is called \emph{essential} if its index (or semi-index in the nonorientable case) is nonzero.

\begin{defn} 
The \emph{Nielsen [equalizer] number} $N(f_1, \dots, f_k)$ is defined to be the number of essential equalizer classes of $(f_1, \dots, f_k)$.
\end{defn}

From Theorem \ref{coineqclasses} and the definition of the index of a class, we see that $N(f_1,\dots,f_k)$ is equal to the Nielsen coincidence number of the pair $(F,G)$. 
Since the Nielsen equalizer number is so closely related to a coincidence number, we can obtain a Wecken-type theorem for the minimal number of equalizer points. Let $\ME(f_1, \dots, f_k)$ be the \emph{minimal number of equalizer points}, defined as
\[ \ME(f_1, \dots, f_k) = \min \{ \#\Eq(f'_1,\dots,f'_k) \mid f'_i \htp f_i \}. \]

By the solution properties of the index and semi-index, every essential equalizer class must contain an equalizer point, and so 
\[ N(f_1,\dots,f_k) \le \ME(f_1,\dots,f_k). \]
These two quantities are in fact equal in most cases, as the following theorem shows.

\begin{thm}\label{wecken}
Let $f_1,\dots,f_k:X \to Y$ be maps of compact manifolds of dimensions $(k-1)n$ and $n$ respectively. If $(k-1)n \neq 2$, then 
\[ \ME(f_1,\dots,f_k) = N(f_1,\dots,f_k). \]

In the case of ``proper'' equalizer theory (when $k>2$), the result holds for all $k$ and $n$ except $(k,n) =(3,1)$, which is to say equalizer theory of three maps from a compact surface to the circle.
\end{thm}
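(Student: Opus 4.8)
The plan is to reduce the statement entirely to the corresponding Wecken-type theorem in coincidence theory, applied to the pair $(F,G)$ of equation \eqref{FGdef}. Recall that $F,G:X \to Y^{k-1}$ are maps between compact manifolds of the same dimension $(k-1)n$, that $\Coin(F,G) = \Eq(f_1,\dots,f_k)$, and that $N(f_1,\dots,f_k) = N(F,G)$. The key point is that this identification is compatible with homotopies \emph{in both directions}: a homotopy of the tuple $(f_1,\dots,f_k)$ induces a homotopy of $(F,G)$, and — this is the content I would isolate as a lemma — any homotopy of the pair $(F,G)$ can be realized (up to not changing the coincidence set) by a homotopy of the individual maps $f_i$, because the target $Y^{k-1}$ is a product and a homotopy $H_t:X \to Y^{k-1}$ is literally a tuple of homotopies into $Y$. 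There is one subtlety: an arbitrary homotopy of $F$ need not keep $F$ of the diagonal form $(f_1,\dots,f_1)$. I would handle this by observing that we only ever need to move $G$: in the classical Wecken argument one fixes $F$ and homotopes $G$ to realize the minimal coincidence number, and homotoping only $G$ corresponds precisely to homotoping $f_2,\dots,f_k$ while leaving $f_1$ (hence $F$) fixed.

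With that reduction in hand, the proof is essentially a citation. The classical Wecken theorem for coincidences of maps between compact manifolds of equal dimension $m$ states that $\MC(F,G) = N(F,G)$ whenever $m \neq 2$; this is the orientable case due to Schirmer and the general (possibly nonorientable) case via the semi-index of \cite{dj93}. Here $m = (k-1)n$, so the hypothesis $(k-1)n \neq 2$ is exactly what is needed, giving $\ME(f_1,\dots,f_k) = \MC(F,G) = N(F,G) = N(f_1,\dots,f_k)$. The first sentence of the theorem follows immediately.

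For the second sentence I would separate out which pairs $(k,n)$ with $k>2$ fail the hypothesis $(k-1)n \neq 2$. Since $k \geq 3$ we have $k-1 \geq 2$, so $(k-1)n = 2$ forces $k-1 = 2$ and $n = 1$, i.e. $(k,n) = (3,1)$. For every other $(k,n)$ with $k > 2$ we have $(k-1)n \geq 3$ (the case $(k-1)n \leq 1$ cannot occur when $k \geq 3$ and $n \geq 1$), so the first part applies. This is precisely the claim: the only excluded proper case is three maps from a surface to the circle, where $(F,G)$ is a pair of self-maps-type coincidence problem in dimension $2$ and the Wecken property can genuinely fail.

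The main obstacle is the realizability lemma — verifying carefully that homotoping the coincidence-minimizing pair $(F,G')$ back to the language of the $f_i$ produces an \emph{admissible} homotopy and that no spurious equalizer points are created, i.e. that the bijection between coincidence classes of $(F,G)$ and equalizer classes of $(f_1,\dots,f_k)$ from Theorem \ref{coineqclasses} is genuinely respected throughout the deformation. Once one commits to only moving $G$ (equivalently $f_2,\dots,f_k$), this is routine: the induced homotopy $H^i_t$ of $f_i$ for $i\ge 2$ is just the $i$-th coordinate of the homotopy of $G$, and $\Eq(H^1_t,\dots,H^k_t) = \Coin(F, G_t)$ at every stage, so compactness of the track in $X \times I$ transfers directly. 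The orientable-versus-nonorientable distinction requires no separate work, since it is already absorbed into the coincidence-theoretic Wecken theorem we are invoking and into the semi-index formalism of Theorem \ref{semiind}.
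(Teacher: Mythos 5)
Your overall strategy is exactly the paper's: pass to the pair $(F,G)$ of \eqref{FGdef}, invoke the Wecken theorem for coincidences in dimension $(k-1)n \neq 2$, and translate the minimizing pair back into a tuple $(f_1, g_2, \dots, g_k)$ using the fact that a map into the product $Y^{k-1}$ is a tuple of maps into $Y$. You also correctly identify the one real subtlety — that an arbitrary homotopy of $F$ destroys the diagonal form $(f_1,\dots,f_1)$, so one must arrange to move only $G$ — and your case analysis for the second sentence matches the paper's.

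The gap is in how you dispose of that subtlety. You assert that ``in the classical Wecken argument one fixes $F$ and homotopes $G$ to realize the minimal coincidence number.'' That is not how the Wecken theorem is stated or standardly proved: it produces $F' \htp F$ and $G' \htp G$ with $\#\Coin(F',G') = N(F,G)$, with \emph{both} maps deformed. The reduction to a deformation of $G$ alone is a separate, nontrivial theorem, and the paper supplies it explicitly: by Brooks \cite{broo71}, for any such pair $(F',G')$ there is a single map $G'' \htp G$ with $\Coin(F,G'') = \Coin(F',G')$, hence $\#\Coin(F,G'') = N(F,G)$; writing $G'' = (g_2,\dots,g_k)$ then gives $\#\Eq(f_1,g_2,\dots,g_k) = N(f_1,\dots,f_k)$. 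Your conclusion is true and the rest of your argument (the transfer of admissibility, the $(k,n)=(3,1)$ exclusion) is fine, but as written the pivotal step rests on an unsupported claim about the coincidence literature rather than on the result that actually justifies it. Cite Brooks (or prove an equivalent one-map-deformation lemma) and the proof closes.
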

\begin{proof}
The second statement is simply a consequence of $k,n$ being natural numbers with $(k-1)n \neq 2$, so we focus on the first statement.

Let $(F,G)$ be defined as in \eqref{FGdef}, and we have $N(f_1,\dots,f_k) = N(F,G)$. The maps $F,G$ are maps between compact manifolds of dimension $(k-1)n$. By our hypothesis this dimension is not 2, and so the Wecken theorem for coincidences (see \cite{gonc05}) gives maps $F'\htp F$ and $G'\htp G$ with $\#\Coin(F',G') = N(F,G)$.  A result of Brooks in \cite{broo71} shows that in fact there is a single map $G'' \htp G$ with $\Coin(F',G') = \Coin(F,G'')$, and thus $\#\Coin(F,G'') = N(F,G)$.

Our map $G''$ is a map of $X \to Y^{k-1}$, so it can be written as $G''(x) = (g_2(x), \dots, g_k(x))$ with $g_i \htp f_i$. Now we have
\[ \#\Eq(f_1, g_2, \dots, g_k) = \#\Coin(F, G'') = N(F,G) = N(f_1,\dots,f_k), \]
and so $\ME(f_1,\dots,f_k) \le N(f_1,\dots,f_k)$ as desired.
\end{proof}

\section{Some computations}\label{compsection}
\subsection{Jiang spaces}
One setting in which the fixed point and coincidence Nielsen numbers
are easily calculated is for maps on \emph{Jiang spaces}. See
\cite{jian83} for the definition and basic results in fixed point
theory. The class of Jiang spaces includes
topological groups, generalized lens spaces and certain other homogeneous
spaces, and is closed under products.
The main result (see \cite{gonc05}) from coincidence theory concerning Jiang spaces is the following:
\begin{thm}\label{coinjiang}
If $f,g:M \to N$ are maps of compact orientable manifolds of the same dimensions and $N$ is a Jiang space, then every coincidence class has the same index.
\end{thm}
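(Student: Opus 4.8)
The plan is to invoke the cyclic‑homotopy characterization of a Jiang space. Fix a basepoint $y_0 \in N$. Recall that $N$ being a Jiang space means that for every $\alpha \in \pi_1(N,y_0)$ there is a homotopy $\{H_t : N \to N\}_{t\in[0,1]}$ with $H_0 = H_1 = \id_N$ whose track $t \mapsto H_t(y_0)$ represents $\alpha$. On the other hand, by the coincidence analogue of Theorem~\ref{reidthm} the coincidence classes of $(f,g)$ are indexed by the Reidemeister set $\Reid(f_*,g_*) = \pi_1(N)/{\sim}$, the class labelled $[\beta]$ being $p_M\Coin(\lift f,\beta\lift g)$ for a fixed choice of lifts $\lift f,\lift g$. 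Since $[\beta]$ ranges over all of $\Reid(f_*,g_*)$ as $\beta$ ranges over $\pi_1(N)$, it suffices to show that for every $\alpha\in\pi_1(N)$ the class labelled $[\alpha]$ has the same index as the class labelled $[1]$.

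So fix $\alpha$, choose a cyclic homotopy $\{H_t\}$ of $\id_N$ with track $\alpha$, and set $g_t = H_t\circ g : M \to N$, a homotopy from $g_0 = g$ to $g_1 = g$. Because $M$ is compact, the set $\{(x,t)\mid x\in\Coin(f,g_t)\}$ is closed in the compact space $M\times I$, hence compact, so $\{(f,g_t)\}$ is an admissible homotopy of the pair $(f,g)$ to itself and the homotopy property of the (orientable) coincidence index applies.

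Next I would track lifts. Let $\lift H_t : \lift N \to \lift N$ be the unique lift of $H_t$ with $\lift H_0 = \id_{\lift N}$; by the track condition together with uniqueness of lifts of homotopies, $\lift H_1$ is precisely the deck transformation $\alpha$. Hence $\{\lift H_t\circ\lift g\}$ is a lift of the homotopy $\{g_t\}$ running from $\lift g$ to $\alpha\lift g$ (while $\lift f$ stays fixed). Consequently the class $p_M\Coin(\lift f,\lift g)$ of $(f,g_0)$ is related, along this homotopy, to the class $p_M\Coin(\lift f,\alpha\lift g)$ of $(f,g_1)$, and homotopy invariance of the coincidence index gives
\[ i\bigl(f,g,\,p_M\Coin(\lift f,\lift g)\bigr) = i\bigl(f,g,\,p_M\Coin(\lift f,\alpha\lift g)\bigr). \]
As $\alpha$ was arbitrary, every coincidence class has the index of the class labelled $[1]$, which is the assertion.

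The main obstacle is the bookkeeping of lifts: one must check carefully that the lift $\lift H_1$ of $\id_N$ singled out by $\lift H_0 = \id_{\lift N}$ is exactly the deck transformation determined by the track $\alpha$ (with the handedness of conventions consistent with the definition of $\Reid$), and that "related along the homotopy" is the correct pairing of classes so that the class‑level form of the homotopy property of the index applies. Everything else — admissibility, and the labelling of coincidence classes by $\Reid(f_*,g_*)$ — is routine. (This is the coincidence counterpart of Jiang's classical theorem in fixed point theory; see also \cite{gonc05}.)
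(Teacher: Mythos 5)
Your argument is correct, but note that the paper does not prove Theorem~\ref{coinjiang} at all: it is quoted as a known background result and attributed to the coincidence-theory literature (\cite{gonc05}), so there is no internal proof to compare against. What you give is the standard cyclic-homotopy argument: realize each $\alpha\in\pi_1(N)$ as the trace of a cyclic homotopy of $\id_N$, compose with $g$ to get a self-homotopy of $g$ whose lift carries $\lift g$ to $\alpha\lift g$, and conclude from the homotopy property of the index that the class labelled $[1]$ and the class labelled $[\alpha]$ have equal index. The one step you rightly flag as needing care --- passing from the open-set form of the homotopy property to the class-level statement --- is handled in the usual way: the coincidence classes of the fat homotopy on $M\times I$ are finitely many disjoint compact sets (classes are open and closed in the compact coincidence set), so each can be isolated by an open set on which the homotopy is admissible, and its slices at $t=0$ and $t=1$ are exactly the pair of related classes. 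With that and a consistent convention identifying $\pi_1(N)$ with the deck group (so that $\lift H_1=\alpha$), the proof is complete and matches the argument one finds in the cited sources.
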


Our theorem concerning Jiang spaces is the following result, which is facilitated by the coincidence theory of the maps $(F,G)$ as in \eqref{FGdef}.
\begin{thm}\label{jiangthm}
If $f_1,\dots,f_k:X \to Y$ are maps of compact orientable manifolds of dimensions $(k-1)n$ and $n$ respectively and $Y$ is a Jiang space, then every equalizer class has the same index.
\end{thm}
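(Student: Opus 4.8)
The plan is to reduce the statement to the coincidence Jiang theorem (Theorem~\ref{coinjiang}) applied to the pair $(F,G)$ from \eqref{FGdef}, in the same spirit as the proofs of Theorems~\ref{indthm} and~\ref{wecken}. First I would recall that $F,G:X\to Y^{k-1}$ are maps between compact orientable manifolds of the same dimension $(k-1)n$; here $Y^{k-1}$ is orientable because a finite product of orientable manifolds is orientable, and $X$ is orientable by hypothesis. So the coincidence index $i(F,G,U)$, and hence the index of each coincidence class of $(F,G)$, is defined.

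The key observation is that $Y^{k-1}$ is itself a Jiang space, since the class of Jiang spaces is closed under products (as recalled at the start of this subsection). Therefore Theorem~\ref{coinjiang} applies to the pair $(F,G)$ with codomain $Y^{k-1}$ and shows that every coincidence class of $(F,G)$ has the same index.

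Finally I would translate this back to the equalizer setting. By Theorem~\ref{coineqclasses}, the equalizer classes of $(f_1,\dots,f_k)$, regarded as subsets of $X$, are exactly the coincidence classes of $(F,G)$; and by the definition of the index of an equalizer class, $\ind(f_1,\dots,f_k,C) = i(F,G,U)$ for any open $U$ with $\Coin(F,G,U)=C$, which is precisely the index of the corresponding coincidence class. Hence all equalizer classes share the common index of the coincidence classes of $(F,G)$, which is the assertion of the theorem.

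I do not expect a real obstacle here: all the substantive work has already been carried out in establishing the correspondence of Theorem~\ref{coineqclasses}, and the only two facts needed beyond Theorem~\ref{coinjiang} — that a product of orientable manifolds is orientable and that a product of Jiang spaces is Jiang — are standard. The one point worth stating carefully is that we invoke the coincidence Jiang theorem with codomain $Y^{k-1}$ rather than $Y$, so it is the Jiang property of the product, not of $Y$ alone, that is being used.
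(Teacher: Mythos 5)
Your proposal is correct and follows exactly the paper's own argument: pass to the pair $(F,G)$ of \eqref{FGdef}, observe that $Y^{k-1}$ is a Jiang space because products of Jiang spaces are Jiang, apply Theorem~\ref{coinjiang}, and transfer the conclusion back through the identification of equalizer classes with coincidence classes of $(F,G)$. The extra remark about orientability of $Y^{k-1}$ is a harmless (and correct) addition of detail the paper leaves implicit.
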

\begin{proof}
Let $F,G:X \to Y^{k-1}$ be given as in \eqref{FGdef}:
\[ F(x) = (f_1(x), \dots f_1(x)), \quad G(x) = (f_2(x), \dots, f_k(x)).  \]

Since $Y$ is a Jiang space, then $Y^{k-1}$ is a Jiang space. Thus by Theorem \ref{coinjiang} all coincidence classes of $F,G$ will have the same coincidence index. But the equalizer classes of $f_1,\dots,f_k$ are the same as the coincidence classes of $F,G$, with the same indices, so all equalizer classes of $f_1,\dots,f_k$ will have the same equalizer index.
\end{proof}

Define the \emph{Reidemeister number} and \emph{Lefschetz number} as: $R(f_1,\dots,f_k) = \#\Reid(\phi_1,\dots, \phi_k)$ (this quantity may be infinite) and $L(f_1,\dots,f_k) = \ind(f_1,\dots,f_k,X)$. Then we obtain:
\begin{cor}
If $f_1,\dots,f_k:X \to Y$ are maps of compact orientable manifolds of dimensions $(k-1)n$ and $n$ respectively, and $Y$ is a Jiang space, then:
\begin{itemize}
\item If $L(f_1,\dots,f_k) = 0$ then $N(f_1,\dots,f_k) = 0$.
\item If $L(f_1,\dots,f_k) \neq 0$ then $N(f_1,\dots,f_k) = R(f_1,\dots,f_k)$.
\end{itemize}
\end{cor}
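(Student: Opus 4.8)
The plan is to read everything off the single fact, supplied by Theorem~\ref{jiangthm}, that all equalizer classes of $(f_1,\dots,f_k)$ have one and the same index; call it $c$. The point to keep in mind is that an equalizer class is allowed to be empty, and an empty class has index $0$, so $c$ is simultaneously the index of \emph{every} Reidemeister class of $(\phi_1,\dots,\phi_k)$ — realized or not. I would also use from Theorem~\ref{indthm} that the equalizer index is additive and that a class of nonzero index is nonempty, together with the standard fact that there are only finitely many nonempty equalizer classes, so that $L(f_1,\dots,f_k)=\ind(f_1,\dots,f_k,X)$ is, by additivity, the finite sum of the indices of the nonempty classes.

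The argument then splits into two cases. First, suppose some Reidemeister class of $(\phi_1,\dots,\phi_k)$ is empty. Then $c=0$, so no equalizer class is essential, giving $N(f_1,\dots,f_k)=0$; and since every nonempty class has index $c=0$ we get $L(f_1,\dots,f_k)=0$. Here both bullets hold — the first directly, the second because its hypothesis fails. Second, suppose every Reidemeister class is nonempty; then $R(f_1,\dots,f_k)$ is finite (it equals the number of nonempty equalizer classes), and additivity gives $L(f_1,\dots,f_k)=c\cdot R(f_1,\dots,f_k)$. If $c=0$ this reads $L=0$, and also $N=0$; if $c\neq0$ it reads $L\neq0$ and, since now every class is essential, $N(f_1,\dots,f_k)=R(f_1,\dots,f_k)$.

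Putting the cases together proves the corollary: $L(f_1,\dots,f_k)\neq0$ can occur only in the second case with $c\neq0$, where $N=R$; and $L(f_1,\dots,f_k)=0$ occurs only in the first case, or in the second case with $c=0$, in both of which $N=0$. As an alternative I could instead reduce to the coincidence pair $(F,G)$ of \eqref{FGdef}: one has $N(f_1,\dots,f_k)=N(F,G)$, $L(f_1,\dots,f_k)=L(F,G)$ and $R(f_1,\dots,f_k)=R(F,G)$, and $Y^{k-1}$ is again a Jiang space, so the statement becomes the classical Jiang-space theorem for coincidences.

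The only place demanding care is the bookkeeping around unrealized (empty) Reidemeister classes: one must be sure Theorem~\ref{jiangthm} is invoked for \emph{all} classes, so that a single empty class pins $c$ to $0$, and one must know there are only finitely many nonempty classes so that the Lefschetz number really is the sum of their indices. Everything else is a formal consequence of the homotopy, additivity, and solution properties already established.
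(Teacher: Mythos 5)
Your proof is correct and follows essentially the same route as the paper's: additivity expresses $L(f_1,\dots,f_k)$ as the sum of the class indices, Theorem~\ref{jiangthm} makes them all equal, and the two bullets follow. The only difference is that you make explicit the bookkeeping about empty Reidemeister classes (a single empty class forces the common index to vanish), which the paper's shorter proof leaves implicit.
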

\begin{proof}
By the additivity property, $L(f_1,\dots,f_k)$ is the sum of the
indices of each equalizer class. By Theorem \ref{jiangthm}
all classes have the same index, thus
$L(f_1,\dots,f_k) = 0$ means that all classes are inessential and so
$N(f_1,\dots,f_k) = 0$. If the Lefschetz number is not zero 
then all classes are essential and so the
Nielsen number is simply the number of classes, which is the
Reidemeister number. 
\end{proof}

\subsection{Tori}
We can give a very specific formula for the Nielsen number of maps $f_1,\dots,f_k:T^{(k-1)n} \to T^n$ on tori. We will view $T^m$ as the quotient of $\R^m$ by the integer lattice, and consider maps which are induced by linear maps on $\R^{(k-1)n} \to \R^n$ taking $\Z^{(k-1)n}$ to $\Z^n$. We can think of such a map as an $n\times(k-1)n$ matrix with integer entries, which we call the ``induced matrix''. 

We now prove the formula which was used in the computation of Example
\ref{picexl}. The result generalizes the well-known formula for the
Nielsen coincidence number on tori which was proved in Lemma 7.3 of
\cite{jezi90}: if $f_1, f_2$ are given by square matrices $A_1$ and
$A_2$, then $N(f_1,f_2) = |\det(A_2-A_1)|$. 

\begin{thm}\label{torusthm}
If $f_1,\dots,f_k:T^{(k-1)n} \to T^n$ are maps on tori with induced matrices $A_i$, then 
\[ N(f_1,\dots,f_k) = \left| \det\begin{pmatrix} A_2 - A_1 \\ \vdots
    \\ A_k - A_1 \end{pmatrix} \right| \]
\end{thm}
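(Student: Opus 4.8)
The plan is to reduce the statement directly to the known coincidence formula on tori via the pair $(F,G)$ of \eqref{FGdef}. As observed just after the definition of the Nielsen equalizer number, $N(f_1,\dots,f_k) = N(F,G)$, where $F,G\colon T^{(k-1)n}\to (T^n)^{k-1}$ are given by $F(x)=(f_1(x),\dots,f_1(x))$ and $G(x)=(f_2(x),\dots,f_k(x))$. Under the standard identification $(T^n)^{k-1}\cong T^{(k-1)n}$, the maps $F$ and $G$ become self-maps of a single torus of dimension $(k-1)n$, so the torus coincidence formula of Lemma 7.3 of \cite{jezi90} will apply once we exhibit the integer matrices representing $F$ and $G$.

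First I would compute those matrices. Lifting to Euclidean space, $f_i$ is represented by $A_i\colon\R^{(k-1)n}\to\R^n$, so $F$ lifts to $x\mapsto(A_1x,\dots,A_1x)$ and $G$ lifts to $x\mapsto(A_2x,\dots,A_kx)$; hence $F$ and $G$ are induced, respectively, by the $(k-1)n\times(k-1)n$ integer block matrices
\[ B_F=\begin{pmatrix} A_1\\ \vdots\\ A_1\end{pmatrix},\qquad B_G=\begin{pmatrix} A_2\\ \vdots\\ A_k\end{pmatrix}, \]
both of which carry $\Z^{(k-1)n}$ into itself. Thus $(F,G)$ is a pair of torus maps of equal dimension of exactly the type handled in \cite{jezi90}, and that lemma gives $N(F,G)=|\det(B_G-B_F)|$. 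Subtracting the block matrices row block by row block yields
\[ B_G-B_F=\begin{pmatrix} A_2-A_1\\ \vdots\\ A_k-A_1\end{pmatrix}, \]
and combining with $N(f_1,\dots,f_k)=N(F,G)$ gives the claimed formula. As a consistency check, this recovers the value $N(f,g,h)=10$ computed in Example \ref{picexl}.

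The proof has no serious obstacle: its content is entirely the reduction together with the bookkeeping of representing $F$ and $G$ by the stacked matrices. The only points requiring care are verifying that $F$ and $G$ genuinely satisfy the hypotheses of Lemma 7.3 of \cite{jezi90} — maps between tori of equal dimension induced by integer matrices, which is immediate from the displayed lifts — and confirming that the identification $(T^n)^{k-1}\cong T^{(k-1)n}$ is compatible with the linear and lattice structures, so that $B_F$ and $B_G$ really are the representing matrices; this is standard. Note finally that when the stacked difference matrix is singular the formula returns $N=0$, which is consistent with Lemma 7.3 in that degenerate case.
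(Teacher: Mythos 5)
Your proposal is correct and follows essentially the same route as the paper: identify $(T^n)^{k-1}$ with $T^{(k-1)n}$, represent $F$ and $G$ by the stacked block matrices, apply the torus coincidence formula of Lemma 7.3 of \cite{jezi90}, and invoke $N(f_1,\dots,f_k)=N(F,G)$. The extra care you take in checking the hypotheses and the lattice compatibility is fine but not needed beyond what the paper already records.
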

\begin{proof}
Let $F,G:T^{(k-1)n} \to T^{(k-1)n}$ be as in \eqref{FGdef}. Then the induced matrices of $F$ and $G$ will be given by block matrices
\[ A_F = \begin{pmatrix}A_1\\\vdots\\A_1\end{pmatrix}, \quad A_G = \begin{pmatrix}A_2\\\vdots\\A_k\end{pmatrix}, \]
and so the formula for the Nielsen coincidence number on tori gives
\[ N(F,G) = \left|\det \begin{pmatrix} A_2 - A_1 \\ \vdots \\ A_k -
    A_1 \end{pmatrix} \right| \]
But $N(F,G) = N(f_1,\dots,f_k)$, and so the result is proved.
\end{proof}

We further note that since tori have the Wecken property for coincidence theory, we can drop the dimension assumption of Theorem \ref{wecken}.
\begin{thm}\label{toriwecken}
Let $f_1,\dots,f_k:T^{(k-1)n} \to T^n$ be maps of tori. Then 
\[ \ME(f_1,\dots,f_k) = N(f_1,\dots,f_k). \]
\end{thm}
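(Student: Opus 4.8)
The plan is to reproduce the proof of Theorem \ref{wecken} almost verbatim, the only change being that the appeal to the Wecken theorem for coincidences in dimensions $\neq 2$ is replaced by the Wecken property for coincidences of self-maps of tori, which holds in \emph{every} dimension. First I would set up $F,G:T^{(k-1)n} \to T^{(k-1)n}$ as in \eqref{FGdef}, given by the block matrices $A_F$ and $A_G$ stacking copies of $A_1$ and the $A_i$ respectively, and recall from Theorem \ref{coineqclasses} (and the definition of the index of a class) that $N(f_1,\dots,f_k) = N(F,G)$, while $\ME(f_1,\dots,f_k) \ge N(f_1,\dots,f_k)$ is already known from the solution property.

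Next, since $F$ and $G$ are self-maps of the torus $T^{(k-1)n}$, the Wecken property for coincidences on tori gives $F' \htp F$ and $G' \htp G$ with $\#\Coin(F',G') = N(F,G)$. Exactly as in the proof of Theorem \ref{wecken}, I would then invoke Brooks's result \cite{broo71} to absorb the homotopy of $F$ into a single homotopy of the second map: there is $G'' \htp G$ with $\Coin(F,G'') = \Coin(F',G')$, hence $\#\Coin(F,G'') = N(F,G)$. Writing $G''(x) = (g_2(x),\dots,g_k(x))$ with $g_i \htp f_i$ yields
\[ \#\Eq(f_1,g_2,\dots,g_k) = \#\Coin(F,G'') = N(F,G) = N(f_1,\dots,f_k), \]
so $\ME(f_1,\dots,f_k) \le N(f_1,\dots,f_k)$, and the two quantities are equal.

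The only ingredient beyond the proof of Theorem \ref{wecken} is the assertion that self-maps of a torus satisfy the Wecken property for coincidences in all dimensions, and the only place this is genuinely needed is dimension $2$ (i.e.\ $(k-1)n = 2$, covering both two maps $T^2 \to T^2$ and three maps from a $2$-manifold to $S^1$), since for tori of dimension $\neq 2$ the classical Wecken coincidence theorem already applies and the statement reduces to Theorem \ref{wecken}. Thus the main point requiring care is to cite or justify the coincidence Wecken property of $T^2$; once that is granted, the block-matrix maps $F,G$ are honest self-maps of $T^{(k-1)n}$, the hypothesis applies directly, and the remainder of the argument is routine.
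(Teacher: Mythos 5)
Your proposal is correct and follows essentially the same route as the paper: pass to the pair $(F,G)$ from \eqref{FGdef}, invoke the coincidence Wecken property of tori (with Brooks's result to concentrate the deformation in $G$ alone), and decompose $G''$ componentwise exactly as in the last paragraph of the proof of Theorem \ref{wecken}. Your added remark that the only genuinely new input is the Wecken property for coincidences on $T^2$ is a correct and slightly more careful observation than the paper's terse appeal to ``tori have the Wecken property.''
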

\begin{proof}
Let $(F,G)$ be as in \eqref{FGdef}, and then since tori have the Wecken property there is a map $G'' \htp G$ with $\#\Coin(F,G'') = N(F,G)$. We finish the argument as in the last paragraph of the proof of Theorem \ref{wecken}.
\end{proof}

\section{Coincidence theory with positive codimension}\label{positivecodimsection}
We end with an application to coincidence theory with positive
codimension, which typically requires much more difficult
techniques than those of this paper. In this setting we consider maps
$f_1,f_2:X \to Y$ of 
compact manifolds of dimensions $m$ and $n$ with $m>n$ and try to
minimize by homotopies the quantity
$\# \pi_0(\Coin(f_1,f_2))$, the number of path components of $\Coin(f_1,f_2)$.

There is no coincidence index in the positive codimension setting, and
so the problem of judging essentiality of classes is more complicated. A
coincidence class $C \subset \Coin(f_1,f_2)$ is \emph{removable by
  homotopy} when there is some pair of homotopies $f_i \htp f_i'$ such
that $C$ is ``related'' (in the sense of Theorem \ref{semiind}) to the
empty class. When a class is not removable by homotopy, it is called
\emph{geometrically essential}. The number of geometrically essential
classes is called the \emph{geometric Nielsen number}, which we denote
$N_G(f_1,f_2)$.

Any two coincidence points which can be connected by a path of
coincidence points will be in the same coincidence class. Thus 
each class is a union of path components of
$\Coin(f_1,f_2)$, and so $N_G(f_1,f_2) \le
\# \pi_0(\Coin(f_1,f_2))$. Since $N_G(f_1,f_2)$ is homotopy invariant, in fact
it is a lower bound for the minimal number of path
components of the coincidence set when $f_1$ and $f_2$ are changed by
homotopies. 

We begin with a simple result which in some cases can demonstrate that
a coincidence class is geometrically essential.
\begin{thm}\label{geomess}
Let $f_1,\dots,f_k:X \to Y$ be maps of spaces of dimension $(k-1)n$ and $n$ respectively, and let $f_i,f_j$ be any two of these maps. Then each equalizer class of $(f_1,\dots,f_k)$ is a subset of some coincidence class of $(f_i,f_j)$, and any coincidence class containing an essential equalizer class is geometrically essential.
\end{thm}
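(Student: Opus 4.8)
The plan is to work entirely with the universal-cover descriptions of equalizer and coincidence classes from Section \ref{classessection}, together with the homotopy and solution properties of the semi-index from Theorem \ref{semiind}.

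\emph{The containment statement.} I would write an arbitrary equalizer class as $C = p_X\Eq(\lift f_1, \alpha_2\lift f_2, \dots, \alpha_k\lift f_k)$ with $\alpha_l \in \pi_1(Y)$, setting $\alpha_1 = \id$ for uniformity. If $\lift x \in \Eq(\lift f_1, \alpha_2\lift f_2, \dots, \alpha_k\lift f_k)$ then in particular $\alpha_i\lift f_i(\lift x) = \alpha_j\lift f_j(\lift x)$, so $\lift x \in \Coin(\alpha_i\lift f_i, \alpha_j\lift f_j)$; hence $C \subseteq p_X\Coin(\alpha_i\lift f_i, \alpha_j\lift f_j)$, and since $\alpha_i\lift f_i$ and $\alpha_j\lift f_j$ are lifts of $f_i$ and $f_j$, the right-hand side is by definition a coincidence class of $(f_i,f_j)$. (Alternatively one can invoke the path description: a path $\gamma$ witnessing that two points lie in the same equalizer class has $f_i(\gamma)\htp f_j(\gamma)$ rel endpoints, hence also witnesses that they lie in the same coincidence class of $(f_i,f_j)$.) When $C$ is nonempty this coincidence class is the unique one meeting $C$, since coincidence classes are disjoint or equal.

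\emph{Geometric essentiality.} Suppose $D$ is a coincidence class of $(f_i,f_j)$ containing an essential equalizer class $C$. Since $C$ is essential it is nonempty (solution property), so by the containment step we may take $D = p_X\Coin(\alpha_i\lift f_i, \alpha_j\lift f_j)$ with notation as above. Assume toward a contradiction that $D$ is removable by homotopy: there are homotopies $f_i \htp f_i'$ and $f_j \htp f_j'$ such that the coincidence class of $(f_i',f_j')$ related to $D$ is empty. Lifting these homotopies produces lifts $\lift f_i', \lift f_j'$ with $p_X\Coin(\alpha_i\lift f_i', \alpha_j\lift f_j') = \emptyset$. Now regard this as a homotopy of the whole $k$-tuple, using the constant homotopy on each $f_l$ with $l\notin\{i,j\}$; since lifts of constant homotopies are constant, the equalizer class related to $C$ is $C' = p_X\Eq(\lift f_1', \alpha_2\lift f_2', \dots, \alpha_k\lift f_k')$ with $\lift f_l' = \lift f_l$ for $l\notin\{i,j\}$. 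By the homotopy property of the semi-index, $|\ind|(f_1',\dots,f_k',C') = |\ind|(f_1,\dots,f_k,C) \neq 0$, so by the solution property $C'$ is nonempty; but applying the containment step to $(f_1',\dots,f_k')$ gives $C' \subseteq p_X\Coin(\alpha_i\lift f_i', \alpha_j\lift f_j') = \emptyset$, a contradiction. Hence $D$ is geometrically essential.

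The one point that needs care — the main obstacle, such as it is — is the bookkeeping of deck-transformation data: one must arrange the lifts and elements $\alpha_i, \alpha_j$ so that $C$, the coincidence class $D$ of $(f_i,f_j)$ containing it, and all of their homotopy-related counterparts are described by the \emph{same} $\alpha$'s, and one must check that extending the two given homotopies by constant homotopies on the untouched maps leaves the representing lifts of those maps unchanged. Once that is set up, the containment $C' \subseteq D'$ is automatic from the first step, and the contradiction with $D'=\emptyset$ is immediate.
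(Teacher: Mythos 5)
Your proof is correct and takes essentially the same approach as the paper: the containment comes from the universal-cover description of equalizer and coincidence classes, and geometric essentiality follows by extending the removing pair of homotopies by constant homotopies on the remaining maps and applying the homotopy and solution properties of the semi-index. You have merely made explicit the lift-and-deck-transformation bookkeeping that the paper's one-sentence argument (``if $D$ were removable then $C$ would be removable as an equalizer class, contradicting essentiality'') leaves implicit.
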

\begin{proof}
To show that each equalizer class is a subset of a coincidence class, let $C$ be an equalizer class. Then there are lifts $\lift f_i$ and deck transformations $\alpha_i$ with 
\[C = p_X\Eq(\alpha_1 \lift f_1, \alpha_2\lift f_2, \dots, \alpha_k\lift f_k) \subset p_X \Coin(\alpha_i \lift f_i, \alpha_j \lift f_j), \]
and the right side above is a coincidence class.

Now let $D \subset \Coin(f_i,f_j)$ be a coincidence class containing some essential equalizer class $C \subset D$. If $D$ were removable by a homotopy as a coincidence class, then necessarily $C$ would be removable by a homotopy as an equalizer class, which is impossible since $C$ is essential. Thus $D$ is geometrically essential.
\end{proof}

We can state the above in terms of Nielsen numbers:
\begin{cor}\label{easyngn}
Let $f_1,f_2:X \to Y$ be maps of spaces of dimension $(k-1)n$ and $n$
respectively. If there are maps $f_3, \dots, f_k$ with
$N(f_1,\dots,f_k)\neq 0$, then $N_G(f_1,f_2) \neq 0$.
\end{cor}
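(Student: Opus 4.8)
The plan is to read this off directly from Theorem \ref{geomess}, applied to the pair $(f_i,f_j) = (f_1,f_2)$. First I would unwind the hypothesis: the assumption $N(f_1,\dots,f_k) \neq 0$ says, by the definition of the Nielsen equalizer number, that $(f_1,\dots,f_k)$ has at least one essential equalizer class $C$ (where ``essential'' means its equalizer index, or semi-index in the nonorientable case, is nonzero). Since $X$ and $Y$ have dimensions $(k-1)n$ and $n$ respectively, the tuple $(f_1,\dots,f_k)$ is of the type considered throughout, so $N(f_1,\dots,f_k)$ and the notion of essential equalizer class are indeed defined.

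Next I would invoke Theorem \ref{geomess} with the chosen pair being $f_1,f_2$: it tells us that the essential equalizer class $C$ is a subset of some coincidence class $D \subset \Coin(f_1,f_2)$, and that any coincidence class containing an essential equalizer class is geometrically essential. Hence $D$ is a geometrically essential coincidence class of $(f_1,f_2)$. Therefore $(f_1,f_2)$ has at least one geometrically essential coincidence class, which is to say $N_G(f_1,f_2) \geq 1$, and in particular $N_G(f_1,f_2) \neq 0$, as claimed.

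I do not expect any genuine obstacle here; the corollary is essentially a restatement of Theorem \ref{geomess} in the language of Nielsen numbers. The only point that warrants a sentence is the bookkeeping that the dimension hypotheses of the corollary match those of Theorem \ref{geomess} exactly, so that the auxiliary maps $f_3,\dots,f_k:X\to Y$ and the equalizer invariant $N(f_1,\dots,f_k)$ are legitimately in scope.
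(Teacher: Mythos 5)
Your argument is correct and is essentially identical to the paper's own proof: both extract an essential equalizer class from $N(f_1,\dots,f_k)\neq 0$ and apply Theorem \ref{geomess} to the pair $(f_1,f_2)$ to obtain a geometrically essential coincidence class, whence $N_G(f_1,f_2)\neq 0$. No gaps.
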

\begin{proof}
If $N(f_1,\dots,f_k) \neq 0$ then there is
an essential equalizer class of $(f_1,\dots,f_k)$, which by Theorem
\ref{geomess} is contained in a geometrically essential coincidence
class of $(f_1,f_2)$. The existence of this coincidence class means
that $N_G(f_1,f_2) \neq 0$.
\end{proof}

Now we focus on tori, for which we can be much more specific about the
value of $N_G(f_1,f_2)$. As we will see, Corollary \ref{easyngn} is strong
enough to give a complete computation of $N_G(f_1,f_2)$ based on the matrices
which specify the maps, even in the case where the domain dimension is
not a multiple of the codomain dimension. 

Theorem \ref{coker} below, computing the value of $N_G(f_1,f_2)$ on tori, is proved by Jezierski in \cite{jezi01}. Most of the argument follows exactly the proof in the codimension zero case given in \cite{jezi90}. The key novel step in the positive codimension setting is the following lemma. 

\begin{lem}\label{fullrankng}
Let $f_1,f_2:T^{m} \to T^{n}$
be maps of tori with induced matrices $A_1,A_2$.
If $A_2-A_1$ has rank $n$, then $N_G(f_1,f_2) \neq 0$.
\end{lem}

Jezierski proves this by using the fact that, for $m>n$, we can consider $T^n$ as a subspace of $T^m$ and then note that the restrictions of $f_1, f_2$ will have a nonremovable coincidence class. Jezierski's approach effectively decreases the domain dimension in order to apply the classical  codimension zero theory. 

We take the opposite approach of \emph{increasing} the domain dimension by taking a product with circles and introducing additional maps $f_3, \dots, f_k$ which allow us to apply Corollary \ref{easyngn}. While Jezierski's approach is simpler for this particular argument, it relies strongly on the fact that there is a standard embedding of $T^n$ inside $T^m$. Since we do not use this fact, we hope that our strategy may be useful in other settings. 

\begin{proof}[Proof of Lemma \ref{fullrankng}]
First we consider the case where $m=(k-1)n$ for some $k$. In this case 
we can choose matrices $A_3, \dots, A_k$ so that 
\[ \det\begin{pmatrix} A_2-A_1\\\vdots\\A_k-A_1\end{pmatrix} \neq
0, \]
and so there are maps $f_3,\dots,f_k$ with $N(f_1,\dots,f_k) \neq
0$. By Corollary \ref{easyngn} this implies that $N_G(f_1,f_2) \neq 0$.

For general $m$, 
let $k>2$ be an integer with $(k-1)n \ge m$. Then define $g_1,
g_2:T^{(k-1)n} \to T^n$ as $g_i = f_i \circ \sigma$, where $\sigma:
T^{(k-1)n} \to T^m$ is the projection onto the first $m$ coordinates
(viewing the torus as a product of circles).
Let $B_i$ be the $(k-1)n \times n$ integer
matrix representing $g_i$. As a matrix, $B_i$ is simply 
$A_i$ with columns of zeros added, and so the rank of $A_2-A_1$ is
the same as that of $B_2 - B_1$. 

Our assumption that $A_2-A_1$ has rank $n$ means that $B_2 - B_1$ has
rank $n$, and so by our first case we have $N_G(g_1,
g_2) \neq 0$, and we have a geometrically
essential coincidence class $D \subset \Coin(g_1,g_2)$. Let $\lift
f_i$ be lifts of $f_i$, and let $\lift g_i = \lift f_i \circ \lift
\sigma$, where $\lift \sigma$ is the projection onto the first $m$
coordinates of $\lift T^{(k-1)n} = \R^{(k-1)n}$. Then $\lift g_i$ is a 
lift of $g_i$, and so there is some $\alpha \in \pi_1(T^n)$ with 
\[ D = p \Coin(\lift g_1, \alpha \lift g_2). \]

Let $x \in \sigma(D)$, so there is some $y$ with $\sigma(y) = x$ and a
lift $\lift y = p^{-1}(y)$ with $\lift g_1(\lift y) = \alpha \lift
g_2(\lift y)$, and thus $\lift f_1( \lift \sigma(\lift y)) = \alpha
\lift f_2(\lift \sigma(\lift y))$. Since $p(\lift \sigma(\lift y)) =
\sigma(y) = x$, we have $x \in p\Coin(\lift f_1, \alpha \lift
f_2)$. This set $C = p\Coin(\lift f_1, \alpha \lift
f_2)$ is a coincidence class of $(f_1,f_2)$, and we have shown that
$\sigma(D) \subset C$.

Recall that we are trying to show that $N_G(f_1,f_2) \neq 0$. For the
sake of a contradiction, assume that $N_G(f_1,f_2) = 0$, so that each
class (in particular the class $C$) is removable by homotopy. This
means there are maps $f'_i \htp f_i$ with lifts 
$\lift f'_i \htp \lift f_i$ such that 
\begin{equation}\label{removed}
p \Coin(\lift f'_1, \alpha \lift f'_2) = \emptyset. 
\end{equation}

Let $g'_i = f'_i \circ \sigma$ and $\lift g'_i = \lift f'_i \circ
\lift \sigma$. Then $g'_i \htp g_i$ and $\lift g'_i \htp \lift
g_i$. Since $D$ is geometrically essential, the related class
$p\Coin(\lift g'_1, \alpha \lift g'_2)$ must be nonempty.
Take some $y$ in this class and a point $\lift y \in p^{-1}(y)$ with $\lift
g'_1(\lift y) = \alpha \lift g'_2(\lift y)$. Then
we have $\lift f'_1(\lift \sigma(\lift y)) = \alpha \lift f'_2(\lift
\sigma(\lift y))$, and so
\[ \sigma (y) \in p \Coin(\lift f'_1, \alpha \lift f'_2), \]
which contradicts \eqref{removed}.
\end{proof}

The above provides the key step in the proof of the following complete computation of the geometric Nielsen number on tori. The argument in codimension zero given in \cite{jezi90} carries without modification in arbitrary codimension, except for this step. Jezierski presents the details, along with a different argument substituting for Lemma \ref{fullrankng} in \cite{jezi01}.

\begin{thm}\label{coker}
Let $f_1,f_2:T^{m} \to T^n$ be maps of tori with induced matrices
$A_1,A_2$. If $A_2-A_1$ has rank $n$, then 
\[ N_G(f_1,f_2) = \#\pi_0(\Coin(f_1,f_2)) = \#\coker(A_2-A_1), \]
where $\coker(A_2-A_1) = \Z^n / \im(A_2 - A_1)$, the cokernel of
$A_2-A_1$ when viewed as a homomorphism $\Z^m \to \Z^n$.
\end{thm}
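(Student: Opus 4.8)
The plan is to prove the two equalities separately: first to identify the path components of $\Coin(f_1,f_2)$ with its coincidence classes and count them, and then to show that every such class is geometrically essential. Together with the general bound $N_G(f_1,f_2)\le\#\pi_0(\Coin(f_1,f_2))$ already established, this will give $N_G(f_1,f_2)=\#\pi_0(\Coin(f_1,f_2))=\#\coker(A_2-A_1)$.

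\emph{Step 1: the coincidence classes are the path components, and there are $\#\coker(A_2-A_1)$ of them.} Put $B=A_2-A_1$. Lifting to $\R^m$, the class $p_X\Coin(\lift f_1,\alpha\lift f_2)$ is the image of $\{x\in\R^m\mid Bx=-\alpha\}$, which (since $B$ has rank $n$) is nonempty and is an affine subspace parallel to $\ker B$; hence every coincidence class is nonempty and path-connected. By Theorem \ref{reidthm} the Reidemeister relation reads, additively, $\beta\equiv\alpha\pmod{B\Z^m}$, so $\Reid(\phi_1,\phi_2)=\Z^n/B\Z^m=\coker B$, which is finite because $B$ has rank $n$. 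Since each coincidence class is a union of path components of $\Coin(f_1,f_2)$ and is itself path-connected, each is a single path component; thus the path components are exactly the (all nonempty) coincidence classes, and $\#\pi_0(\Coin(f_1,f_2))=\#\coker(A_2-A_1)$. It therefore remains to show that every coincidence class of $(f_1,f_2)$ is geometrically essential.

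\emph{Step 2: every coincidence class of $(f_1,f_2)$ is geometrically essential.} First suppose $m=(k-1)n$. Since $B$ has rank $n$, I would choose integer matrices $A_3,\dots,A_k$ so that $M=\begin{pmatrix}A_2-A_1\\\vdots\\A_k-A_1\end{pmatrix}$ is a nonsingular $(k-1)n\times(k-1)n$ matrix, giving maps $f_3,\dots,f_k:T^m\to T^n$; by Theorem \ref{torusthm}, $N(f_1,\dots,f_k)=|\det M|\neq0$, so some equalizer class is essential, and since $Y^{k-1}=T^{(k-1)n}$ is a Jiang space, Theorem \ref{jiangthm} shows all equalizer classes have equal index, hence \emph{all} are essential. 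Now the map $(\Z^n)^{k-1}\to\Z^n$, $(\alpha_2,\dots,\alpha_k)\mapsto\alpha_2$, is surjective, and the $i=2$ part of the equivalence relation defining $\Reid(\phi_1,\dots,\phi_k)$ is exactly the relation defining $\Reid(\phi_1,\phi_2)$, so this map descends to a surjection $\Reid(\phi_1,\dots,\phi_k)\to\Reid(\phi_1,\phi_2)$. As the equalizer class labelled $(\alpha_2,\dots,\alpha_k)$ lies inside the coincidence class of $(f_1,f_2)$ labelled $\alpha_2$, surjectivity shows every coincidence class of $(f_1,f_2)$ contains an essential equalizer class, hence is geometrically essential by Theorem \ref{geomess}. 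For general $m$, pick $k$ with $(k-1)n\ge m$ and set $g_i=f_i\circ\sigma:T^{(k-1)n}\to T^n$, with $\sigma$ the coordinate projection, as in the proof of Theorem \ref{betterfullrankng}; padding by zero columns leaves $\coker(A_2-A_1)$ unchanged, so a coincidence class $C=p_X\Coin(\lift f_1,\alpha\lift f_2)$ of $(f_1,f_2)$ corresponds to the class $D=p\Coin(\lift g_1,\alpha\lift g_2)$ of $(g_1,g_2)$, which is geometrically essential by the case just treated. If $C$ were removed by homotopies $f_i\htp f_i'$, then $g_i'=f_i'\circ\sigma$ would remove $D$ — this is the argument already carried out in the proof of Theorem \ref{betterfullrankng} — contradicting the essentiality of $D$; so $C$ is geometrically essential.

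Combining the two steps, every coincidence class of $(f_1,f_2)$ is geometrically essential, so $N_G(f_1,f_2)$ equals the total number of coincidence classes, which by Step 1 is $\#\pi_0(\Coin(f_1,f_2))=\#\coker(A_2-A_1)$. The main obstacle is Step 2, and in particular the observation that the ``forget all but the first pair of maps'' map on Reidemeister sets is surjective — this is what forces \emph{every} coincidence class, not merely one, to meet an essential equalizer class; once that is in hand, the passage from $m=(k-1)n$ to arbitrary $m$ is just a class-by-class repackaging of the reduction already used for Theorem \ref{betterfullrankng}.
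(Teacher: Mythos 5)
Your proof is correct, and Step 1 (connectedness of each lifted class as an affine translate of $\ker(A_2-A_1)$, plus the additive computation of the Reidemeister set as $\Z^n/\im(A_2-A_1)$) coincides with the paper's argument. Where you genuinely diverge is the essentiality step. The paper disposes of it in one line by citing an external result on positive-codimension coincidence theory for maps into Jiang spaces (Theorem 4.14 of \cite{gonc05}): since $N_G(f_1,f_2)\neq 0$ by Theorem \ref{betterfullrankng} and $T^n$ is a Jiang space, $N_G(f_1,f_2)=R(f_1,f_2)$. You instead prove directly that every coincidence class is geometrically essential using only the machinery built in this paper: extend $A_2-A_1$ to a nonsingular square matrix to get auxiliary maps $f_3,\dots,f_k$ with $N(f_1,\dots,f_k)\neq 0$, use Theorem \ref{jiangthm} to upgrade ``some equalizer class is essential'' to ``all are,'' and then observe that the forgetful map $\Reid(\phi_1,\dots,\phi_k)\to\Reid(\phi_1,\phi_2)$ is surjective, so every coincidence class contains an essential equalizer class and Theorem \ref{geomess} applies; the passage to general $m$ repeats the projection argument of Theorem \ref{betterfullrankng} class by class. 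The surjectivity observation is the genuinely new ingredient, and it is what makes Corollary \ref{easyngn} (which only detects \emph{one} essential class) strong enough to account for \emph{all} of them. Your route is longer but self-contained and shows that the full strength of the cited Jiang-space theorem in positive codimension is not needed here; the paper's route is shorter at the cost of an external dependency. One cosmetic point: in the subcase $m=(k-1)n$ you should note that $k$ may be taken $\ge 3$ (always possible, since one is free to enlarge $k$ in the reduction), so that the auxiliary maps $f_3,\dots,f_k$ actually exist.
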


As a brief illustration, we compute the geometric
Nielsen coincidence numbers for the maps $f,g,h:T^2 \to S^1$ from
Example \ref{picexl}.
\begin{exlctd}
Recall our maps were given by matrices:
\[ A_f = (3\quad 1),\quad A_g = (0 \quad 2),\quad A_h = (-1\quad -1). \]
For each pair of matrices the rank assumption of Theorem \ref{coker}
holds.

It is straightforward to compute the required cokernels. We have $A_g - A_f = (3
\quad -1)$, and so $\im(A_g - A_f) = \Z$, since $\gcd(3,-1) = 1$. Thus
the cokernel is trivial and so $N_G(f,g) = 1$. A similar computation
shows that $N_G(g,h) = 1$. For $(f,h)$, we have $A_h - A_f = (2 \quad
-2)$, and so $\im(A_h - A_f) = 2\Z$. Thus the cokernel is $\Z/2\Z$,
and so $N_G(f,h) = 2$.

By Theorem \ref{coker} these Nielsen numbers should agree with the
number of path components of the coincidence sets. Counting components
in Figure \ref{exlfig} indeed gives
$\#\pi_0(\Coin(f,g))=\#\pi_0(\Coin(g,h))=1$ and $\#\pi_0(\Coin(f,h))=2$. 
\end{exlctd}

\section*{Addendum}
The author was fortunate to attend the International Conference on Nielsen Theory and Related Topics at Capitol Normal University in Beijing in Summer 2011 after this article was initially published. There he learned from Peter Wong that a Nielsen type equalizer theory in the category of smooth orientable manifolds was obtained as a special case of a more general theory by Dobre\'nko and Kucharsky in \cite{dk90}. 

For smooth orientable manifolds $M$ and $N$ and a submanifold $B \subset N$ with $\dim X + \dim B = \dim Y$, Dobre\'nko and Kucharsky define a general Nielsen theory for counting the cardinality of $f^{-1}(B)$ for a map $f:M\to N$. In a brief parenthetical, they remark that applying this theory to a tuple map $(f_1,\dots,f_k):M \to N^k$ and letting $B$ be the diagonal in $N^k$ gives a ``Nielsen number of coincidences of two or more maps'' which essentially matches the theory presented in this paper.

The author would like to thank Peter Wong for this reference.

\end{document}